\documentclass[11pt, oneside]{article}   	
\usepackage{geometry}    
\usepackage{bbold}
\usepackage[dvipsnames]{xcolor}
\usepackage{authblk}
\usepackage{cancel} 
\usepackage{ulem} 

\geometry{letterpaper}                   		
								
\usepackage{subfigure}
\usepackage{amssymb,amsmath,amsthm}
\usepackage[colorlinks=true, citecolor=blue, urlcolor=blue, linkcolor=red]{hyperref}
\usepackage{float}
\usepackage{fullpage}
\usepackage{empheq}


\newcommand{\Dc}{\mathcal{D}}
\newcommand{\Ec}{\mathcal{E}}
\newcommand{\Nbb}{\mathbb{N}}
\newcommand{\Pbb}{\mathbb{P}}
\newcommand{\Rbb}{\mathbb{R}}

\newtheorem{theorem}{Theorem}[section]
\newtheorem{remark}[theorem]{Remark}
\newtheorem{lemma}[theorem]{Lemma}
\newtheorem{proposition}[theorem]{Proposition}
\newtheorem{example}[theorem]{ Example}

\usepackage{caption}
\usepackage{tabularx}



\newcommand{\red}[1]{\textcolor{black}{#1}}
\newcommand{\blue}[1]{\textcolor{black}{#1}}
\newcommand{\magenta}[1]{\textcolor{black}{#1}}

\title{Nonlinear manifold approximation using  compositional polynomial networks}

\author[1]{Antoine Bensalah}
\author[2]{Anthony Nouy}
\author[1,2]{Joel Soffo}

\affil[1]{Airbus}
\affil[2]{Centrale Nantes, Nantes Universit\'e, LMJL, CNRS UMR 6629, France}

\date{}							

\begin{document}
\maketitle

\begin{abstract}
We consider the problem of approximating a  subset $M$ of a Hilbert space $X$ by a low-dimensional manifold $M_n$, using samples from $M$. We propose a nonlinear approximation method 
where $M_n $ is defined as the range of a smooth nonlinear decoder $D$ defined on $\mathbb{R}^n$ with values in a possibly high-dimensional linear space $X_N$,  and a linear encoder $E$ which associates to an element from $ M$ its coefficients $E(u)$ on a basis of a $n$-dimensional subspace $X_n \subset X_N$, where \red{$X_N$ is an optimal or near to optimal linear space}, depending on the selected error measure
The linearity of the encoder allows to easily obtain the parameters $E(u)$ associated with a given element $u$ in $M$. 
The proposed  decoder is a polynomial map from  $\mathbb{R}^n$ to $X_N$ which is obtained by a tree-structured composition of polynomial maps, estimated sequentially from samples in $M$. 
Rigorous error and stability analyses are provided, as well as an adaptive strategy for constructing \red{the subspace $X_n$, and} a decoder that guarantees an approximation of the set $M$ with controlled   mean-squared or wort-case errors, and a controlled stability (Lipschitz continuity) of the encoder and decoder pair. We demonstrate the performance of our method through numerical experiments.

\end{abstract}

\section{Introduction}

We consider the problem of approximating a  subset $M$ of a Hilbert space $X$ by some low-dimensional manifold, using samples from $M$. \magenta{Typical examples include the case where $M$ is the image of a Hilbert-valued random variable or the set of solutions of a parameter-dependent equation.}
A large class of manifold approximation (or dimension reduction) methods can be described by an encoder $E : M  \to \mathbb{R}^n$ and a decoder 
$D : \mathbb{R}^n \to X$. The decoder provides a parametrization of a $n$-dimensional manifold 
$$
M_n  = \{ D(a) : a \in \mathbb{R}^n\},
$$
while the encoder is associated with the approximation process, and associates to an element $u \in M$ a parameter value $a = E(u) \in \mathbb{R}^n$. 
An element $u\in M$ is approximated by $D\circ E(u) \in M_n$. This problem is equivalent to approximating the identity map on $M$ by a composition $D\circ E$, which is sometimes called an auto-encoder of $M$. 

Linear methods consist in approximating $M$ by a $n$-dimensional linear space $M_n$, e.g. constructed by greedy algorithms in reduced basis methods, with a control of a worst-case error over $M$, or proper orthogonal decomposition (POD) or principal component analysis (PCA) for a control of a mean-squared error over $M$, see  \cite{book_rbm,Quarteroni2015ReducedBM,morbook2017}. Although linear methods have been proved to be efficient for numerous cases, there are many situations \red{(such as transport-dominated parameter-dependent equations)} where the required dimension $n$ to obtain a good approximation of $ M $ \red{has to be large}, which yields a prohibitive computational cost when it comes to using $M_n$ for different online tasks such as inverse problems, optimization, uncertainty quantification... To  overcome limitations  of linear methods,  several nonlinear dimension reduction methods have been introduced. First, different approaches consider for $M_n$ a union of linear or affine spaces \cite{farhat_localrobs, certified_hp}, that can be obtained by partitioning  the set $M$ into different subsets and approximating each subset by a linear or affine subspace of fixed or variable dimension. 
In this context, adaptive online strategies were proposed in \cite{Carlberg_2014,peherstorfer2020modelreductiontransportdominatedproblems} with real-time applications. Different neural networks architectures were also proposed, as in \cite{Fresca2021May,Kadeethum_2022,KIM2022110841,Lee2020Mar}, where encoders and decoders are both neural networks. 
Such approaches have been proved to give accurate results in various applications, but suffer from scalability issues, as they involve the dimension of the high dimensional space. Also, learning the nonlinear maps is a difficult task, which prevents to achieve precision that is often requested in practical applications from computational science.    

In this work, following \cite{Barnett_2022,Geelen_2023,Barnett2023Nov,geelen2024}, we propose a nonlinear approximation method using a nonlinear decoder with values in a linear/affine space $X_N$ of dimension $N \ge n$, and a linear encoder which associates to an element $u \in M$ its coefficients on a basis of a $n$-dimensional subspace $X_n \subset X_N$, where \red{$X_N$ is an optimal or near-optimal space}, e.g. obtained by PCA or greedy algorithms depending on the desired control of the error. 
The linearity of the encoder allows to easily obtain the parameters $a = E(u)$ associated with a given element $u$ in $M$. 
The proposed  decoder is a polynomial map from  $\mathbb{R}^n$ to $X_N$ which is obtained by a tree-structured composition of polynomial maps, estimated from samples in $M$. More precisely, \blue{given an element $\bar u\in X$ and basis $\varphi_1, \hdots, \varphi_N$ of $X_N$}, the proposed decoder takes the form 
\blue{
$$
D(a) = \bar u + \sum_{i \in I} a_i \varphi_i + \sum_{i \in I^c} g_i(a) \varphi_i,
$$
where $I $ is a subset of $\{1,\dots , N\}$ of cardinal $n$, $I^c$ is its complementary set, and the maps $g_i$ are defined recursively using compositions of polynomial maps.
 The functions $\{\varphi_i : i\in I\}$ form a basis of $X_n$.} Rigorous error and stability analyses are provided, as well as an adaptive strategy for constructing a decoder that guarantees an approximation of the manifold $M$ with controlled error in mean-squared or wort-case settings, and a controlled stability (Lipschitz continuity) of the encoder and decoder pair. 
\red{Note that in general, choosing for $X_n$ an optimal space for linear approximation (i.e. $I=\{1,\dots,n\}$ when the functions $(\varphi_i)_{i\ge 1}$ form an optimal hierarchy for linear approximation) is usually not optimal when using nonlinear decoders. This has been demonstrated in \cite{schwerdtner2024greedyconstructionquadraticmanifolds} for quadratic manifold approximation, where the authors propose a greedy algorithm for the construction of a space $X_n$ from $n$ suitably selected elements of a basis of $X_N$.  Our approach follows the same line, and selects the index set $I$ in an adaptive manner based on approximation and stability properties.
}
The performance of our method is demonstrated through numerical experiments.

The rest of the paper is organized as follows. In Section \ref{sec:manifold-approx}, we present different manifold approximation methods, from classical linear methods to more recent nonlinear methods, with an encoder-decoder point of view. We discuss limitations of state of the art methods and motivate the introduction of a new approach based on compositions of functions. In Section \ref{sec:method}, we present a new nonlinear method with a linear encoder and a nonlinear decoder based on compositions of functions. We present an error analysis in  worst-case and mean-squared settings, and a stability analysis.
Based on the previous analyses, we propose in   Section \ref{sec:algorithm}  an adaptive algorithm for the construction of the encoder-decoder pair, which guarantees to approximate the manifold with a prescribed error and Lipschitz constant of the auto-encoder. In Section \ref{sec:experiments}, we illustrate the performance of our approach through numerical experiments.

\section{Manifold approximation and related $n$-widths}\label{sec:manifold-approx}

Dimension reduction methods can be classified in terms of the properties of their encoders and decoders. The optimal performance   of a given class  $\Ec_n$ of encoders   from $X$ to $\Rbb^n$  and a given class  $\Dc_n$ of decoders from $\Rbb^n \to X$ can be assessed  in worst-case setting  by 
 $$
w(M ; \Ec_n,\Dc_n)_X = \inf_{D\in \Dc_n , E \in \Ec_n}  \sup_{u \in M} \Vert u- D\circ E(u) \Vert_X.
$$
This defines a notion of width of the set $M$. If the set $M$ is equipped with a Borel measure $\rho$ with finite order $p$ moment, $p>0$, the optimal performance can be measured in $p$-average sense by 
$$
w^{(p)}(M , \rho \, ; \Ec_n,\Dc_n)_X = \inf_{D\in \Dc_n , E \in \Ec_n} \left( \int_M \Vert u- D\circ E(u) \Vert_X^p d\rho(u) \right)^{1/p}.
$$

\subsection{Linear approximation}\label{sec:lin-approx}

When the decoder $D$ is a linear map, its range $M_n$ is a linear space with dimension at most $n$. This corresponds to a  linear approximation of the set $M$.
Restricting both the decoder and the encoder to be linear maps yields the approximation numbers 
$$
a_n(M)_X = \inf_{D\in  L( \Rbb^n ; X) , E \in  L(X ; \Rbb^n)}  \sup_{u \in M} \Vert u- D\circ E(u) \Vert_X = \inf_{\mathrm{rank}(A)=n}  \sup_{u \in M} \Vert u- A u \Vert_X,
$$  
where the infimum is taken over all linear maps $A \in L(X;X)$ with rank $n$. This 
 characterizes the optimal performance of linear  methods. A set $M$ is relatively compact if and only if $a_n(M)_X$ converges to $0$ as $n$ goes to infinity \blue{\cite{pinkus2012n}.}

Restricting only the decoder to be a linear map yields the  Kolmogorov $n$-width 
$$
d_n(M)_X = \inf_{D \in L(\Rbb^n ; X)} \sup_{u \in M} \inf_{a \in \Rbb^n} \Vert u- D(a) \Vert_X = \inf_{\dim{M_n}=n}   \sup_{u \in M} \inf_{v\in M_n} \Vert u-v\Vert_X,
$$
which measures how well $M$ can be approximated by a $n$-dimensional space. When $X$ is a Hilbert space, an optimal decoder-encoder pair $(D,E)$ corresponds  to the orthogonal projection $P_{M_n}$ from  $X$ onto an optimal $n$-dimensional space $M_n$. 
Given a basis $\varphi_1, \hdots,\varphi_n$ of an optimal space  $M_n$, the optimal decoder is $D(a) = \sum_{i=1}^n a_i \varphi_i$ and the associated optimal  encoder  provides the coefficients in $\Rbb^n$ of  the orthogonal  projection $P_{M_n} u$, which is a linear and continuous map. Therefore, $d_n(M)_X = a_n(M)_X$ when $X$ is a Hilbert space. However, for a Banach space $X$,  the optimal encoder is possibly nonlinear and non-continuous, but $d_n(M)_X \le a_n(M)_X \le \sqrt{n} d_n(M)_X $ \cite{pinkus2012n}.

In practice, optimal linear spaces in worst-case setting are out of reach but near-optimal spaces $M_n$ can be obtained by greedy algorithms \cite{DeVore:2013fk}, that generate spaces from samples in $M$. They consist in constructing a sequence of nested spaces $0 = M_0 \subset M_1 \subset \dots \subset M_n \subset \dots$ such that $M_n = \mathrm{span}\{v_1, \hdots, v_n\}$ and   $v_{n+1}$ is an element from $M$ which is not well approximated by $M_n$, more precisely
$$
\inf_{v \in M_n} \Vert v_{n+1} - v \Vert_X \ge \gamma \sup_{u\in M} \inf_{v \in M_n} \Vert u - v \Vert_X,
$$
for some fixed constant $0 < \gamma \le 1.$ The performance of this algorithm has been studied in \cite{DeVore:2013fk}. In practice, the set $M$ can be replaced by a (large) finite set of samples $u_1, \hdots, u_m$ from  $M$,  and the $v_i$ are selected from this finite set, see \cite{cohen2020reduced}  for guaranteed approaches using sequential random sampling in $M$.

When $X$ is a Hilbert space and the error is measured in $p$-average sense with $p=2$ (mean-squared error), it yields a $\rho$-average Kolmogorov $n$-width  $d_n^{(2)}(M,\rho)_X$ 
such that 
$$
d_n^{(2)}(M, \rho)_X^2 := \inf_{D \in L(\Rbb^n; X) } \int_M \inf_{a \in \Rbb^n} \Vert u- D(a) \Vert_X^2d\rho(u)  = \inf_{\dim{M_n}=n}  \int_{M} \Vert u- P_{M_n} u \Vert_X^2 d\rho(u).
$$
\blue{Under some assumptions on the measure $\rho$,} an optimal space $M_n$ is given by a dominant eigenspace of a  compact operator $T : X\to X$ defined by
$$
T : v\mapsto \int_M u (u , v)_X d\rho(u),
$$
and 
$$
d_n^{(2)}(M,\rho)_X = \sqrt{\sum_{i>n} \lambda_i } \, ,
$$
where $\{\lambda_i \}_{i\ge 1}$ is the sequence of eigenvalues of $T$, sorted by decreasing values. This is equivalent to finding the $n$ dominant singular values and right dominant singular space of the operator $U : v \in X \mapsto (u,v)_{X} \in L^2_\rho(X)$, such that $T = U^* U$ with $U^* : g \in L^2_\rho\mapsto \int_M u g(u) d\rho(u) \in X $ the dual of $U$. \blue{A classical setting where the above decomposition holds is when $\rho$ is the distribution of a second order strongly measurable random variable with values in $X$, which is the classical framework for functional principal component analysis, or the push-forward measure of a Bochner square integrable function from a set equipped with a finite measure to $X$.}

An optimal space can be estimated in practice from samples $u_1, \hdots, u_m$ in $M$ by solving 
$$
\inf_{\dim{M_n}=n}  \sum_{i=1}^m \Vert u_i- P_{M_n} u_i \Vert_X^2,
$$
whose solution is the  dominant eigenspace of the operator
$
\hat T : v\mapsto \sum_{i=1}^m u_i (u_i , v)_X.
$
This is equivalent to computing the singular value decomposition of the operator 
$
\hat U : x \in \Rbb^m \mapsto \sum_{i=1}^m x_i u_i \in X,
$  
whose dual is $\hat U^* : v\in X \mapsto ((u_i,v)_X)_{i=1}^m \in  \Rbb^m $.

If $\rho$ is a probability measure with mean $\bar u := \int u \rho(u) $ equal to zero, then $T$ is the covariance operator of $\rho$ and $M_n$ is the space of principal components of $\rho$. If $\rho$  has  mean $\bar u \neq 0 ,$ we can consider the covariance  operator $T v =\int_M (u-\bar u) (u- \bar u , v)_X d\rho(u) $, and $u \in M$ is approximated by $\bar u + P_{M_n} (u-\bar u),$ with $M_n$ the dominant eigenspace of $T$.

\subsection{Nonlinear approximation}\label{sec:nonlin-approx}

For many practical applications, linear approximation methods present a bad performance, which requires to introduce   nonlinear decoders. 

For any (relatively) compact $M$, letting  $\Ec_n$ and $\Dc_n$ be the sets of all possible nonlinear maps yields 
 $w(M , \Ec_n,\Dc_n)_X = 0$ for any $n\ge 1.$ However, this corresponds  to unreasonable approximation methods.  
Restricting both the  decoders and encoders to be continuous yields the notion of nonlinear manifold width of DeVore-Howard-Micchelli \cite{DeVore1989}
$$
\delta_n(M)_X = \inf_{D\in C(\Rbb^n ; X ) , E \in  C(  X ; \Rbb^n) }  \sup_{u \in M} \Vert u- D\circ E(u) \Vert_X,$$
which represents the  performance of an optimal continuous approximation process. For numerical stability reasons, continuity is in general not sufficient. 
Further assuming that encoders and decoders are Lipschitz continuous yields the notion of stable width introduced in \cite{Cohen2022Jun}. Lipschitz continuity ensures the stability  of the corresponding approximation process with respect to perturbations, a crucial property when it comes to practical implementation.  
However, even when imposing stability,  nonlinear widths are usually associated with optimal nonlinear encoders whose practical implementation may be difficult or even infeasible, e.g. associated with NP-hard optimization problems. This difficulty appears when $D(a)$ is a neural network or a tensor network with parameters $a$, and we aim at finding an optimal parameter value $a$. 

Restricting the encoder to be linear and continuous and allowing for arbitrary nonlinear decoders yields sensing numbers 
$$
s_n(M)_X = \inf_{D : \Rbb^n \to X} \inf_{\ell_1,\hdots, \ell_n}  \sup_{u \in M} \Vert u- D(\ell_1(u), \hdots, \ell_n(u)) \Vert_X,
$$
where the infimum is taken over all linear forms $\ell_1, \hdots,\ell_n$ and all nonlinear maps $D$. This provides a benchmark for all nonlinear approximation methods with linear encoders, which is relevant in many applications where the available information on $u$ is linear (point evaluations of functions, local averages of functions or  more general linear functionals).  In information-based complexity (IBC) \cite{Traub1998}, $s_n(M)_X$ is known as the worst-case error of non-adaptive deterministic algorithms using $n$ linear  information. Similar notions of error exist for average case setting, and random information or algorithms.   

\subsection{Nonlinear approximation with linear encoders and nonlinear decoders with values in linear spaces}

The class of nonlinear decoders has to be restricted to some subsets of decoders with feasible implementation. 
A practical approach consists in restricting the encoder to be linear and the decoder $D$ to take values in some linear  space $X_N$ with dimension $N \ge n$ \cite{Barnett_2022, Geelen_2023, Barnett2023Nov, geelen2024}. The range of $D$ is a nonlinear manifold $M_n$ in $X_N$. 
Given a basis $\varphi_1,\hdots, \varphi_N$ of $X_N$, the decoder has the form 
\begin{align}
D(a) = D_L(a) + D_N(a),
\label{decoderform}
\end{align}
where $D_L$ is the  linear  operator from $\Rbb^n$ to $X_n := \mathrm{span}\{\varphi_1, \hdots,\varphi_n\}$ defined by
$$
D_L(a) = \sum_{i=1}^n a_i \varphi_i,
$$
and $D_N$ maps $\Rbb^n$ to the complementary space of $X_n$ in $X_N$, 
$$
D_N(a) = \sum_{i=n+1}^{N} g_i(a) \varphi_i,
$$ 
where the functions $ g_i : \mathbb R^n \rightarrow \mathbb R $ are nonlinear maps. \blue{Note that the functions $\varphi_1,\dots,\varphi_n$ are related to the encoder}.

 For an orthonormal basis, if we choose the linear encoder $E(u) = (a_i(u))_{i=1}^n$ with $a_i(u) = (u,\varphi_i)_X$, then  $ D_L (E(u)) = P_{X_n} u$ is the orthogonal projection of $u$ onto $X_n$, and $D_N(E(u))$ is a nonlinear correction in the orthogonal complement of $X_n$ in $X_N.$ 
The space $X_n$ can be seen as a choice of a coordinate system   for  the description of the nonlinear manifold $M_n$.
\red{The space $X_N$ can be optimized but a practical choice  consists in taking for this space the optimal or near-optimal space of linear methods}, given by principal component analysis (optimal in mean-squared error) or greedy algorithms (close to optimal in worst case error), see Section~\ref{sec:lin-approx}. 

\begin{remark}
We can also consider a decoder with values in an affine space $\bar u + X_N$, and an affine encoder providing the coefficients $a = E(u)$ of $u- \bar u$ in the subspace $X_n \subset X_N$. This is classical when performing a principal component analysis, where $\bar u = \int_M u \, d\rho(u)$ is chosen as the expectation of a probability  measure $\rho$ on $M$.
\end{remark}

\begin{remark}
Even if the decoder is nonlinear, the error is  lower bounded by the Kolmogorov width  $ d_N(M)_X$ in  worst-case setting or by the average Kolmogorov width $d^{(2)}_N(M,\rho)_X$ in  mean-squared setting. Hence, when these widths present a slow convergence  with $N$, the approach may require a large value of $N$.
\end{remark}

The above structure of the decoder is based on the observation that in many applications, for an element $u\in M$, the coefficients $a_i(u)$ for $i>n$ can be well approximated as functions $g_i(E(u))$ of a few coefficients $E(u)= (a_i(u))_{i=1}^n$.
However, even for manifolds $M$ that can be well approximated by such a low-dimensional manifold $M_n$, the functions $g_i$, and therefore the map $D_N$ may be highly nonlinear and difficult to estimate. The authors in \cite{Barnett_2022,Geelen_2023} restrict the maps $g_i$ to quadratic polynomials. Higher degree polynomials were used in \cite{geelen2024}, but only sums of univariate polynomials were considered, for complexity issues. Such restrictions may lead to  a limited performance of these methods in practice. 
The authors in \cite{Barnett2023Nov,cohen2023}  used highly expressive approximation tools such as neural networks or random forests, but the resulting accuracy is not what could be expected from these tools. This is due to the difficulty of learning with such approximation tools using limited data. 
\\
\par 

In this paper, we propose a new decoder architecture based on compositions of polynomials. \blue{The architecture is based on the following observation made through our own numerical experiments, and which we illustrate in section \ref{sec:experiments}. \textit{A coefficient $a_i(u)$ for $i>n$ may have a highly nonlinear  relation with the first $n$ coefficients $a = E(u)$ but a much smoother relation when expressed in terms of $a$ and additional coefficients $a_j(u)$ with $n<j<i$}}. This observation suggests the following compositional structure of the decoder's functions 
$$
g_i(a) = f_i(a, (g_{j}(a))_{n<j\le n_i}),
$$ 
where \blue{$n_i < i$ and} the $f_i$ are polynomial functions. 
The use of global polynomial functions allows us to learn functions $f_i$ from  a finite training sample, although the variables $(a , (g_j(a))_{n<j\le n_i})$ take values in a set of measure zero in $\Rbb^{n_i}$.

\begin{example}
 As a simple and illustrative example, consider a one-dimensional manifold $M = \{ u(t)= a_1(t) \varphi_1 + a_2(t) \varphi_2 + a_3(t) \varphi_3 
 : t\in [-1,1] \}$, with $a_1(t) = t$, $a_2(t) = 5t^3 - 4t $ and $a_3(t) = 625 t^{10} - 1500 t^8 + 1200 t^6 - 340 t^4 + 16 t^2$. 
 $M$ is contained in the $1$-dimensional manifold  
 $M_1 = \{ D(a_1) :=  a_1 \varphi_1 + g_2(a_1) \varphi_2  + g_3(a_1) \varphi_3 : a_1 \in \Rbb \}$, with  $g_2(t) = a_2(t)$ and $g_3(t) = a_3(t).$ 
 Figures \ref{fig:toy_manifold_a2_a1} and \ref{fig:toy_manifold_a3_a1} represent the functional relation between $a_2$ and $a_1$ and between $a_3$ and $a_1$, respectively.  On Figure \ref{fig:toy_manifold_f3}, we observe the manifold  $M$ and a bivariate polynomial $f_3$ \blue{which is smoother than the univariate polynomial $g_3$} and is such that $a_3(t) = f_3(a_1(t) , a_2(t)) = f_3(a_1(t), g_2(a_1(t)))$.
 

\begin{figure}[h]
    \centering
    \subfigure[$ a_2 $ as function of $ a_1 $\label{fig:toy_manifold_a2_a1}]{\includegraphics[width=0.33\textwidth]{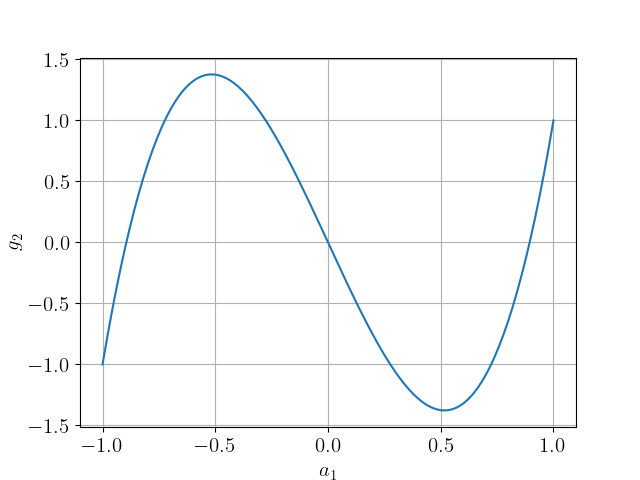}} 
    \subfigure[$ a_3 $ as function of $ a_1 $\label{fig:toy_manifold_a3_a1}]{\includegraphics[width=0.33\textwidth]{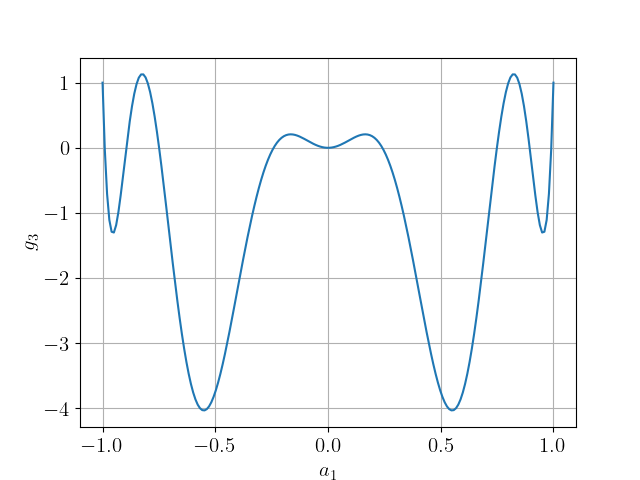}}
    \subfigure[Manifold $M$  (black curve) and bivariate polynomial function $f_3$ such that $a_3 = f_3(a_1,a_2)$ (surface); \blue{$f_3$ is smoother than $g_3$}\label{fig:toy_manifold_f3}]{\includegraphics[width=0.33\textwidth]{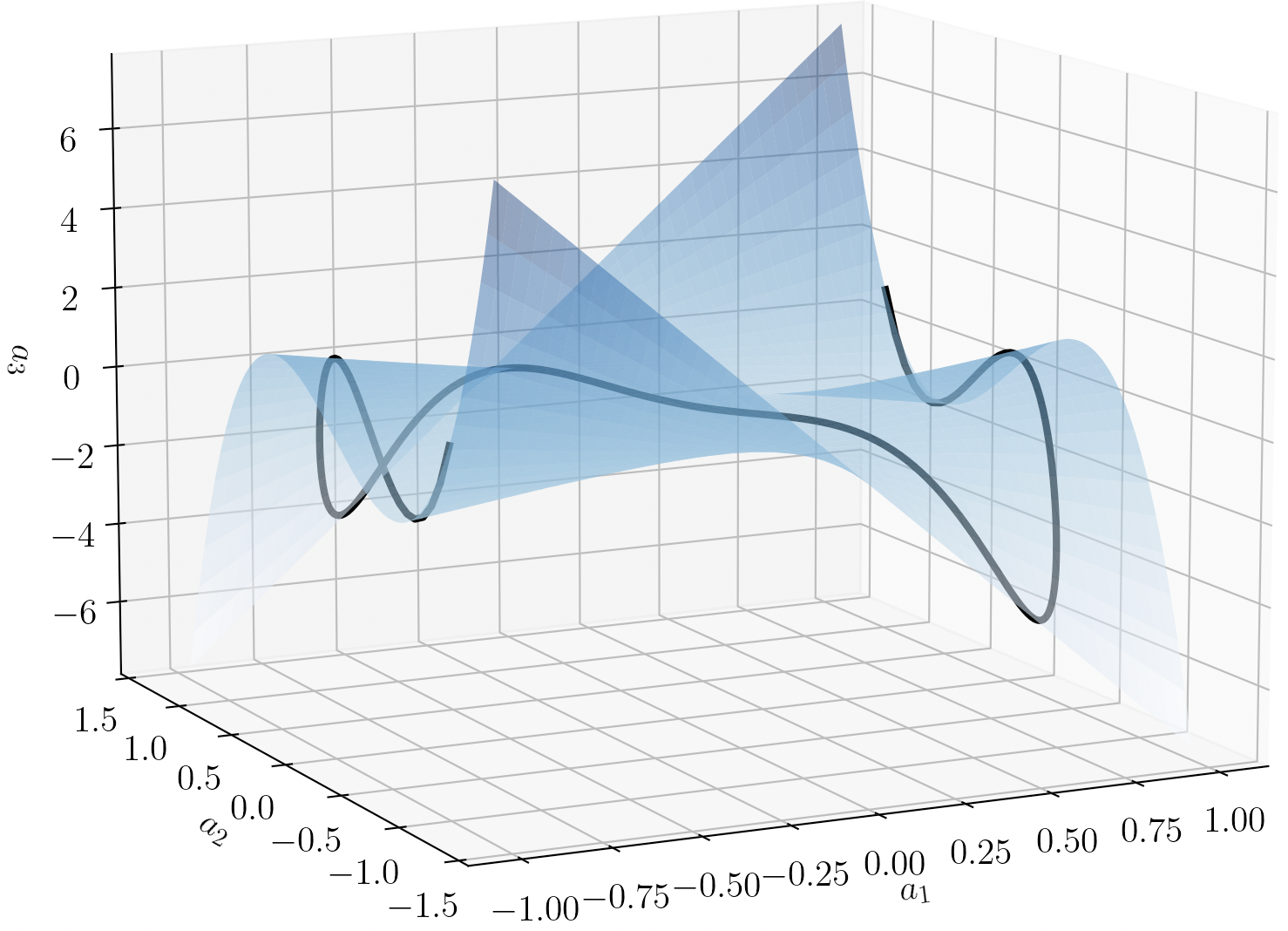}}
    \caption{Illustrative example for a one-dimensional manifold in $\Rbb^3$.}
\label{fig:toy_manifold}
\end{figure}
\end{example}

\blue{In the references cited above, the functions $\varphi_1, \dots, \varphi_n $ are usually chosen such that $X_n$ is an optimal (or near optimal) space for linear approximation. However, this choice is in general not optimal for nonlinear approximation, as demonstrated in  \cite{schwerdtner2024greedyconstructionquadraticmanifolds} for quadratic manifold approximation, where the authors propose a greedy algorithm for selecting an index set $I=\{1, \ldots, n\}$ and the corresponding space $X_n$ generated by the functions $\{\varphi_i : i\in I\}$. We next follow the same idea, with an algorithm selecting adaptively an index set $I$.}

 \section{Nonlinear approximation using  compositional polynomial networks}\label{sec:method}

In this section, we introduce a manifold approximation method based on a linear/affine encoder and a  nonlinear decoder based on compositions of polynomials. We provide error and stability analyses that will allow us to design a controlled constructive algorithm in the next section. 

\subsection{Description of the encoder and decoder}
Let $\bar u\in X$ and $\varphi_1, \hdots, \varphi_N$ be an orthonormal basis of a space  $X_N$, e.g. provided by some optimal or near to optimal linear approximation method, and let $X_n$ be a subspace of $X_N$ spanned by basis functions $\varphi_i$, $i\in I = \{i_1, \hdots,i_n\}$.
We consider for the encoder $E$
the map which associates to $u \in M$  the coefficients $ E(u) = ((u-\bar u,\varphi_i)_X)_{i \in I}$ of the orthogonal projection of $u-\bar u$ onto $X_n$. For the decoder, we consider the map $ D : \ell_2(I) \rightarrow \blue{X}$ defined by
$$
D(a) = \bar u +  \sum_{i = 1}^N g_i(a) \varphi_i
$$
with $g_{i_k}(a) := a_k$ for $k=1,\hdots,n$, and for all $i \in I^c := \{1,\hdots, N\} \setminus I$, 
$$
g_i(a) = f_i( (g_j(a))_{j\in S_i}), 
$$  
with  $S_i = \{1 , \hdots, n_i\}$, $n_i < i$ and  $f_i : \Rbb^{n_i} \to \Rbb$ a $n_i$-variate polynomial whose structure will be discussed later. \blue{Note that the choice of $S_i \subset \{1,\dots,i-1\}$ results in a  feed-forward compositional structure for $D$.}

 The choice of the set $I$ should be  determined  such  that $\bar u + X_n$ provides a good approximation of $M$, but also such that the functions $g_i$ have a low complexity. 
The ordering of basis functions may be optimal \blue{for linear approximation} in the sense that the set $I = \{1, \hdots,n\}$ provides an optimal linear subspace $X_n$ of dimension $n$ for linear approximation, for any $1\le n \le N$.  \blue{However, as mentioned earlier, this does not necessarily result in an optimal space for nonlinear approximation.} It may be relevant to define an encoder associated with indices $I$ that cannot be well approximated as functions of other coefficients. This is the reason why we here consider a general set $I$, whose practical construction will be discussed in Section \ref{sec:algorithm}.

\subsection{Control of approximation error}\label{sec:error}

We here analyse the error of approximation of the manifold $M$ by $D\circ E(M)$, in mean-squared or worst case settings. 
This is equivalent to measuring the quality of approximation of the identity map $id : M \to X$ by $D\circ E$. 
Consider the mean-squared error
$$
e_2(D\circ E) := \Vert id - D\circ E \Vert_2 := \left( \int_M \Vert u - D(E(u)) \Vert_X^2 d\rho(u)  \right)^{1/2}
$$
and the worst-case error
$$
e_\infty(D\circ E) := \Vert id - D\circ E \Vert_\infty := \sup_{u\in M} \Vert u - D(E(u)) \Vert_X.
$$
\blue{In the mean-squared setting, we assume that $\rho$ has finite second order moment, i.e. $e_2(0)^2 = \int_M \Vert u\Vert_X^2 d\rho(u) <\infty$. In the worst-case setting, we assume that $M$ is bounded, i.e. $e_\infty(0) := \sup_{u\in M} \Vert u\Vert_X <\infty$.} 
We aim at providing a methodology which guarantees a prescribed  relative precision $\epsilon$, i.e.
$$
e_p(D\circ E) \le \epsilon \; e_p(0), 
$$
with $p=2$ or $p =  \infty$.  
An element $u \in M$ admits a decomposition 
$$
u = \bar u + \sum_{i=1}^N a_i(u) \varphi_i + r_N(u),
$$
with $a_i(u) = (\varphi_i , u-\bar u)_X$ and $r_N(u) = (id - P_{X_N}) (u-\bar u)$, and  
$$
\Vert u - D(E(u)) \Vert_X^2 = \sum_{i \in I^c} \vert a_i(u) - g_i(E(u)) \vert^2 + \Vert r_N(u) \Vert_X^2   .
$$
We deduce the following bounds of the mean-squared and worst-case errors.
\begin{lemma}
It holds 
$$
e_2(D\circ E)^2 = \sum_{i \in I^c} \epsilon_{i,2}^2 + \Vert r_N \Vert_2^2,
$$
with 
$$
\epsilon_{i,2}^2 = \Vert a_i - g_i \circ E \Vert_2^2  = \int_M \vert a_i(u) - g_i(E(u)) \vert^2 d\rho(u),
$$
and 
$$
e_\infty(D\circ E) \le \sum_{i \in I^c} \epsilon_{i,\infty} + \Vert r_N \Vert_\infty,
$$
with 
$$
\epsilon_{i,\infty}
= \Vert a_i - g_i \circ E \Vert_\infty
= \sup_{u\in M} \vert a_i(u) - g_i(E(u)) \vert.
$$
\end{lemma}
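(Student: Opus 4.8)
The plan is to start from the pointwise orthogonal decomposition already recorded just above the statement. Subtracting $D(E(u)) = \bar u + \sum_{i=1}^N g_i(E(u))\varphi_i$ from $u = \bar u + \sum_{i=1}^N a_i(u)\varphi_i + r_N(u)$ and using that, by construction of the encoder, $g_{i}(E(u)) = a_i(u)$ for every $i\in I$ (so these terms cancel), one gets
$$
u - D(E(u)) = \sum_{i\in I^c}\bigl(a_i(u) - g_i(E(u))\bigr)\varphi_i + r_N(u).
$$
Since $\{\varphi_i\}_{i=1}^N$ is orthonormal and $r_N(u) = (id - P_{X_N})(u-\bar u)$ is orthogonal to $X_N = \mathrm{span}\{\varphi_1,\dots,\varphi_N\}$, hence to each retained $\varphi_i$, Pythagoras' theorem in the Hilbert space $X$ yields
$$
\Vert u - D(E(u))\Vert_X^2 = \sum_{i\in I^c}\bigl|a_i(u) - g_i(E(u))\bigr|^2 + \Vert r_N(u)\Vert_X^2,
$$
which is exactly the identity displayed in the excerpt.

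For the mean-squared bound I would simply integrate this identity against $\rho$ over $M$. Because $I^c$ is finite, the integral commutes with the sum, giving
$$
e_2(D\circ E)^2 = \sum_{i\in I^c}\int_M \bigl|a_i(u) - g_i(E(u))\bigr|^2 d\rho(u) + \int_M \Vert r_N(u)\Vert_X^2 d\rho(u),
$$
and recognizing the two families of terms as $\epsilon_{i,2}^2 = \Vert a_i - g_i\circ E\Vert_2^2$ and $\Vert r_N\Vert_2^2$ finishes this part. This step is purely a matter of unwinding the definition of the $\Vert\cdot\Vert_2$ norms.

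For the worst-case bound I would go back to the decomposition of $u - D(E(u))$ itself rather than its square, and apply the triangle inequality in $X$ together with $\Vert\varphi_i\Vert_X = 1$ to get, for each fixed $u\in M$,
$$
\Vert u - D(E(u))\Vert_X \le \sum_{i\in I^c}\bigl|a_i(u) - g_i(E(u))\bigr| + \Vert r_N(u)\Vert_X.
$$
Taking the supremum over $u\in M$ and using subadditivity of the supremum over the finitely many summands yields $e_\infty(D\circ E) \le \sum_{i\in I^c}\sup_{u\in M}|a_i(u) - g_i(E(u))| + \sup_{u\in M}\Vert r_N(u)\Vert_X = \sum_{i\in I^c}\epsilon_{i,\infty} + \Vert r_N\Vert_\infty$, as claimed.

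There is no genuine obstacle here: the proof is elementary linear algebra in a Hilbert space. The only points requiring a line of justification are that the coefficients indexed by $i\in I$ drop out (this is precisely the design of the linear encoder) and that $r_N(u)$ is orthogonal to all retained basis functions, so that in the mean-squared case one gets an \emph{equality} via Pythagoras rather than merely an inequality. The move from equality to inequality in the worst-case statement is intrinsic and stems from the two triangle inequalities used, one in the $X$-norm and one for the supremum over $M$.
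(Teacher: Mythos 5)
Your proposal is correct and follows the same route the paper intends: it rests on the pointwise Pythagorean identity displayed just before the lemma, integrates it against $\rho$ for the mean-squared equality, and passes to the worst-case bound via the triangle inequality (equivalently, the $\ell^2\le\ell^1$ comparison applied to the square root of that identity) followed by subadditivity of the supremum. The two points you flag as needing justification --- cancellation of the coefficients indexed by $I$ by design of the linear encoder, and orthogonality of $r_N(u)$ to $X_N$ --- are exactly the right ones, so nothing is missing.
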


The above lemma shows that the error can be controlled with a suitable choice of $N$ and with a suitable control of the errors $\epsilon_{i,p}$. 

\begin{proposition}\label{prop:error-control}
Let $\epsilon>0$, $\beta \in [0,1]$ and $p=2$ or $p= \infty$. Let 
$N$ be the minimal integer  such that $$
\Vert r_N \Vert_p \le \beta \, \epsilon \,  e_p(0).
$$
Assume that for all $i\in I^c$, 
$$
\epsilon_{i,p} \le \bar{\epsilon}_{i,p},
$$
with 
$$
\sum_{i\in I^c} \bar{\epsilon}_{i,2}^{\,2} \le   (1 - \beta^2)  \, \epsilon^2 \, e_2(0)^2,
$$
for $p=2$, or 
$$
\sum_{i\in I^c} \bar{\epsilon}_{i,\infty} \le   (1 - \beta) \, \epsilon \, e_\infty(0),
$$
for $p=\infty$. Then it holds 
$$
e_p(D\circ E) \le   \epsilon\, e_p(0). 
$$ 
\end{proposition}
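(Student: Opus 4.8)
The plan is to derive the bound directly from the preceding Lemma, treating the two cases $p=2$ and $p=\infty$ separately, since the Lemma provides an exact identity in the first case and only a triangle-type upper bound in the second. In both cases the argument is purely arithmetic: substitute the hypothesis $\epsilon_{i,p}\le\bar\epsilon_{i,p}$ and the defining property of $N$ into the expression from the Lemma, and check that the two contributions add up to at most $\epsilon\,e_p(0)$ (or its square). Note that minimality of $N$ plays no role in the inequality itself — any $N$ with $\Vert r_N\Vert_p\le\beta\epsilon\,e_p(0)$ would do; minimality only ensures $N$ is not taken larger than necessary.

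For $p=2$, I would start from the identity $e_2(D\circ E)^2=\sum_{i\in I^c}\epsilon_{i,2}^2+\Vert r_N\Vert_2^2$, bound the sum by $\sum_{i\in I^c}\bar\epsilon_{i,2}^{\,2}\le(1-\beta^2)\epsilon^2 e_2(0)^2$ using the hypothesis on the $\bar\epsilon_{i,2}$, and bound $\Vert r_N\Vert_2^2\le\beta^2\epsilon^2 e_2(0)^2$ using the choice of $N$. Adding the two gives $e_2(D\circ E)^2\le\epsilon^2 e_2(0)^2$, and taking square roots yields the claim. Here the split of the budget $\epsilon^2$ into a ``tail part'' $\beta^2\epsilon^2$ and a ``correction part'' $(1-\beta^2)\epsilon^2$ is exactly what makes the two pieces recombine.

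For $p=\infty$, I would start instead from the inequality $e_\infty(D\circ E)\le\sum_{i\in I^c}\epsilon_{i,\infty}+\Vert r_N\Vert_\infty$ of the Lemma, bound $\sum_{i\in I^c}\epsilon_{i,\infty}\le\sum_{i\in I^c}\bar\epsilon_{i,\infty}\le(1-\beta)\epsilon\,e_\infty(0)$ and $\Vert r_N\Vert_\infty\le\beta\epsilon\,e_\infty(0)$, and add, obtaining $e_\infty(D\circ E)\le\epsilon\,e_\infty(0)$. The only thing to keep straight is that the worst-case budget is split linearly (as $\beta$ and $1-\beta$) rather than quadratically, which is dictated by the subadditive — rather than Pythagorean — form of the error bound in that case. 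There is no genuine obstacle in this proof; the entire content is the careful bookkeeping of the two error budgets and the observation that the Lemma already isolates the tail term $\Vert r_N\Vert_p$ from the compositional-approximation terms $\epsilon_{i,p}$.
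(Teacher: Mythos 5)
Your proof is correct and is exactly the argument the paper intends: the proposition is stated as an immediate consequence of the preceding lemma (the paper gives no separate proof), and your case-by-case substitution of the hypotheses into the lemma's identity for $p=2$ and its subadditive bound for $p=\infty$ is precisely that reasoning. Your remark that minimality of $N$ is irrelevant to the inequality is also accurate.
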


A natural choice is to equilibrate the error due to the projection onto $X_N$ and the errors due to the approximation  of coefficients, by taking 
 $\beta = 1/\sqrt{2}$ for $p=2$ or $\beta = 1/2$ for $p=\infty$. However, the parameter $\beta$ allows an additional flexibility. A high value of $\beta$ may be interesting when the convergence of $\Vert r_N \Vert_p$ with $N$ is slow and the coefficients $a_i(u) = (\varphi_i, u -\bar u)_X$ are rather easy to approximate in terms of $E(u)$. A low value of $\beta$ may be relevant when  $\Vert r_N \Vert_p$ decreases rapidly with $N$ and the coefficients $a_i(u)$ are difficult to approximate in terms of $E(u).$

The choice of an increasing sequence of tolerances $(\bar\epsilon_{i,p})_{i\in I^c}$ allows to require less and less precision for the approximation of coefficients $a_i(u)$ as the index $i$ increases. Indeed, when the sequence of functions $\varphi_i$ results from an optimal or near optimal linear approximation method, with a natural ordering, coefficients $a_i(u)$  are in general more and more difficult to approximate as the index $i$ increases. A practical choice consists in taking 
$$
\bar \epsilon_{i,2} = \omega_i^{1/2} (1 - \beta^2)^{1/2}  \, \epsilon \, e_2(0)\quad \text{or} \quad \bar \epsilon_{i,\infty} =  \omega_i (1 - \beta) \, \epsilon \, e_\infty(0),
$$
with weights $\omega_i > 0$ such that $\sum_{i\in I^c} \omega_i = 1$, e.g.
\begin{equation}
\omega_i = i^{\alpha} / \sum_{j\in I^c} j^\alpha, \quad i \in I^c, \label{eq:weights}
\end{equation}
for some $\alpha >0$.

The infimum of the error $\epsilon_{i,p}$ over all possible maps $g_i$ may not be zero. This is the case when there is no functional relation between $a_i(u)$ and the coefficients $E(u).$ Also, when using for the maps $f_i$ some approximation tool with limited complexity  (e.g. polynomials with some prescribed degree), the obtained error $\epsilon_{i,p}$ may not achieve the required precision. In these situations, the index $i$ should be taken in the set $I$ of reference parameters.     
This suggests an adaptive choice of $I$ and the associated encoder, as will be proposed in Section \ref{sec:algorithm}.  

\subsection{Control of stability}\label{sec:stability}

For practical use, it is important to control the stability of the approximation process, or sensitivity to perturbations. 
The chosen encoder, as a map from $X$ to $\ell_2(I)$, is $1$-Lipschitz and therefore ideally stable. Indeed, for any $u,\tilde u\in X$,
$$
\Vert E(u) - E(\tilde u) \Vert_{2} = (\sum_{i \in I} \vert (u - \tilde u,\varphi_i)_X \vert^2 )^{1/2} = \Vert P_{X_n}(u-\tilde u) \Vert_2 \le \Vert u-\tilde u \Vert_X.
$$

Let $ A = \{E(u) :  u \in M \} \subset  \ell_2(I)  $ be the set of parameters values when $ u $ spans the whole set $ M$. We study the Lipschitz continuity of the decoder as a map from $ A$ to $X$. Let $\gamma = (\gamma_i)_{i\in I^c}, \blue{\gamma_i > 0}$. For $i\in I^c$, we let 
$ B_i = \{b =  (g_j(a))_{j \in S_i} : a \in A \} \subset \Rbb^{n_i}$ which we equip with the norm 
$$
\Vert b \Vert_{i, \gamma} = \max\{ \Vert (b_j)_{j \in I \cap S_i} \Vert_2 , \max_{j \in S_i \cap I^c} \gamma_j^{-1} \vert b_j \vert\},
$$
and we  define a corresponding Lipschitz norm of a function $f_i:B_i \to \Rbb$ as 
\begin{equation}
\| f_i \|_{i, \gamma} = \max_{b,b'\in B_i} \frac{|f_i(b) - f_i(b') |}{\Vert b-b' \Vert_{i,\gamma} }. \label{eq:Lipi}
\end{equation}
\begin{proposition}\label{prop:lip-decoder}
Let $\gamma = (\gamma_i)_{i\in I^c}$ \blue{with $\gamma_i > 0$} and assume $\| f_i \|_{i, \gamma} \le \gamma_i$ for all $i\in I^c$. Then for all $a,a'\in A$, it holds 
$$
\Vert D(a) - D(a') \Vert_X \le (1+ \sum_{i\in I^c} \gamma_i^2)^{1/2} \Vert a - a'\Vert_2.
$$
\end{proposition}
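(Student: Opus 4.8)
The plan is to use the orthonormality of $\varphi_1,\dots,\varphi_N$ to reduce the claim to a coefficient-wise estimate, and then to propagate Lipschitz bounds through the tree by induction; the weighted norms $\Vert\cdot\Vert_{i,\gamma}$ and $\|f_i\|_{i,\gamma}$ are tailored precisely so that this propagation closes. Since $D(a) = \bar u + \sum_{i=1}^N g_i(a)\varphi_i$, for any $a,a'\in A$,
$$
\Vert D(a) - D(a')\Vert_X^2 = \sum_{i=1}^N |g_i(a) - g_i(a')|^2 = \sum_{i\in I} |g_i(a) - g_i(a')|^2 + \sum_{i\in I^c} |g_i(a) - g_i(a')|^2 .
$$
For $i = i_k\in I$ we have $g_i(a) = a_k$, so the first sum is exactly $\sum_{k=1}^n |a_k - a_k'|^2 = \Vert a-a'\Vert_2^2$. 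Hence it is enough to prove the coefficient-wise bound $|g_i(a)-g_i(a')|\le\gamma_i\Vert a-a'\Vert_2$ for every $i\in I^c$: the second sum is then at most $\big(\sum_{i\in I^c}\gamma_i^2\big)\Vert a-a'\Vert_2^2$, and summing the two contributions yields precisely $\big(1+\sum_{i\in I^c}\gamma_i^2\big)\Vert a-a'\Vert_2^2$.

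I would establish this coefficient-wise bound by strong induction on $i\in\{1,\dots,N\}$ in increasing order. Well-foundedness comes from the tree structure of the decoder: $g_i$ depends only on $(g_j)_{j\in S_i}$ with $S_i=\{1,\dots,n_i\}$ and $n_i<i$, so that when index $i$ is treated, all indices in $S_i$ have already been processed; the case $n_i=0$ (in particular the base case $i=1$ when $1\in I^c$) is degenerate since then $f_i$ is constant and $g_i(a)-g_i(a')=0$. For the induction step, fix $i\in I^c$, set $b=(g_j(a))_{j\in S_i}$ and $b'=(g_j(a'))_{j\in S_i}$, which both lie in $B_i\subset\Rbb^{n_i}$, and write $g_i(a)-g_i(a')=f_i(b)-f_i(b')$. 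By the definition \eqref{eq:Lipi} of $\|f_i\|_{i,\gamma}$ together with the hypothesis $\|f_i\|_{i,\gamma}\le\gamma_i$, this is bounded in absolute value by $\gamma_i\Vert b-b'\Vert_{i,\gamma}$, so it remains to show $\Vert b-b'\Vert_{i,\gamma}\le\Vert a-a'\Vert_2$. Unfolding the definition of $\Vert\cdot\Vert_{i,\gamma}$ as a maximum of two terms: for $j\in I\cap S_i$, the numbers $g_j(a)-g_j(a')$ form a subfamily of the coordinates of $a-a'$, whence $\Vert(g_j(a)-g_j(a'))_{j\in I\cap S_i}\Vert_2\le\Vert a-a'\Vert_2$; and for each $j\in I^c\cap S_i$ we have $j\le n_i<i$, so the induction hypothesis gives $\gamma_j^{-1}|g_j(a)-g_j(a')|\le\Vert a-a'\Vert_2$. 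Taking the maximum of the two bounds yields $\Vert b-b'\Vert_{i,\gamma}\le\Vert a-a'\Vert_2$, which closes the induction and hence the proof.

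I expect the only point needing some care to be a clean formulation of the induction — making explicit that the increasing ordering together with $n_i<i$ makes the recursion well-founded, and disposing of the degenerate $n_i=0$ case — after which the estimate on $\Vert b-b'\Vert_{i,\gamma}$ and the final summation are routine. Everything hinges on the fact that the norm on $B_i$ rescales the ``correction'' coordinates $j\in I^c$ by $\gamma_j$, which is exactly what lets the per-index Lipschitz constants $\gamma_j$ cancel against these weights when the recursion is unwound.
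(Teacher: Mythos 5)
Your proof is correct and follows essentially the same route as the paper's: expand $\Vert D(a)-D(a')\Vert_X^2$ via orthonormality to isolate $\Vert a-a'\Vert_2^2$ plus the $I^c$ contributions, then show $|g_i(a)-g_i(a')|\le \gamma_i\Vert a-a'\Vert_2$ by induction on $i$, using the weighted norm $\Vert\cdot\Vert_{i,\gamma}$ so that the factors $\gamma_j^{-1}$ cancel the Lipschitz constants of previously treated coefficients. The only cosmetic difference is in the base case: the paper notes that for the minimal $i\in I^c$ one has $S_i\subset I$, whereas you fold everything into a single strong induction and mention the degenerate $n_i=0$ case, which amounts to the same thing.
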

\begin{proof}
For $ a, a' \in A $, it holds 
$$
\|D(a) - D(a') \|_X^2 = \Vert a - a'\Vert_2^2 + \sum_{i \in I^c }^N |g_i(a) - g_i(a') |^2.
$$
We then show by induction that functions $g_i$ are $\gamma_i$-Lipschitz from $\ell_2(I)$ to $\Rbb$, for all $i\in I^c$. For $i$ the minimal integer in $I^c$,  \blue{we easily check that $S_{i} \subseteq I$} and therefore 
\begin{align*}
 | g_i(a) - g_i(a') | &= | f_i( (g_j(a))_{j\in S_i} ) - f_i( (g_j(a'))_{j\in S_i}) | \\
 &\le \gamma_i \Vert (g_j(a))_{j \in S_i} - (g_j(a'))_{j \in S_i} \Vert_2 \\
 & \leq \gamma_i \Vert a - a'\Vert_2.
\end{align*}
Then for any  $i \in I^c$, assuming that $g_j$ is $\gamma_j$-Lipschitz for $j \in S_i \subset \{1, \hdots, i-1\}$,  it holds 
\begin{align*}
    | g_i(a) - g_i(a') | 
    & \le \gamma_i \max\{ \Vert (g_j(a) - g_j(a'))_{j \in S_i \cap I} \Vert_2 , \max_{j \in S_i \cap I^c} \gamma_j^{-1} \vert g_j(a) - g_j(a') \vert\}\\
    &\le \gamma_i \max\{ \Vert a-a' \Vert_2 , \max_{j \in S_i \cap I^c} \gamma_j^{-1} \gamma_j \Vert a-a' \Vert_2 \}\\
    &= \gamma_i  \Vert a-a' \Vert_2.
\end{align*}
This concludes the proof.
\end{proof}
\begin{remark}[Estimation of Lipschitz norms]
In practice, we can  estimate the Lipschitz constant $ \| f_i \|_{i,\gamma} $ given by \eqref{eq:Lipi} using 
 training data. More precisely, given samples $u^{(1)}, \hdots, u^{(m)}$ in $M$ and corresponding samples $a^{(k)} = E(u^{(k)}) \in A$ and $b^{(k)} = (g_j(a^{(k)}))_{j\in S_i} \in B_i$, $1\le k \le m$, we estimate  
$$
\| f_i \|_{i,\gamma} \approx \max_{1 \le k < k' \le m} \frac{|f_i(b^{(k)}) - f_i(b^{(k')}) |}{\Vert b^{(k)}-b^{(k')} \Vert_{i,\gamma} }.
$$
\end{remark}

\section{An adaptive algorithm}\label{sec:algorithm}

We now present an adaptive algorithm for the construction of the encoder and decoder, in order to obtain a prescribed   precision for the approximation of a manifold, in  mean-squared or worst-case settings, and a control of the stability of the decoder.  

Our objective is to provide an algorithm that delivers an encoder $E$ and a decoder $D$ such that 
$$
e_p(D\circ E) = \Vert id - D\circ E \Vert_p \le \epsilon \; e_p(0),
$$
with $p=2$ or $p=\infty$, 
and such that 
$$
\Vert D(a) - D(a') \Vert_2 \le L \Vert a-a'\Vert_2
$$
for all $a,a' \in E(M),$ where $\epsilon$ and $L$ are prescribed by the user. 
Proposition \ref{prop:error-control} provides conditions on the truncation index $N$ and    the approximation errors $\epsilon_{i,p}$ of the coefficients by the maps $g_i$ in order to satisfy the relative precision $\epsilon. $  Proposition \ref{prop:lip-decoder}  
  provides a condition on the Lipschitz norms of functions $f_i$ in order to guarantee that $D$ is $L$-Lipschitz. In practice, we rely on a finite number of samples $u^{(1)}, \hdots, u^{(m)}$ in $M$, which is equivalent to consider the above conditions with a discrete manifold $M = \{u^{(1)}, \hdots, u^{(m)}\}$ or a discrete measure $\rho = \sum_{i=1}^m \delta_{u^{(i)}}.$ Therefore, we keep a general presentation of the methodology. 

\subsection{Selection of the space $X_N$}

We determine the space $X_N$ from an optimal or near-optimal linear approximation method. 

\paragraph{Mean-squared setting}
For $p=2$, we can set $\bar u = 0$ and consider the  operator $T:X\to X$ defined by $T v = \int_M u (u,v)_X d\rho(u) $ and compute its eigenvectors $(\varphi_i)_{i\ge 1}$ sorted by decreasing eigenvalues $(\lambda_i)_{i\ge 1}$. The optimal  linear space $X_N$ of dimension $N$ is provided by the span of the first $N$ eigenvectors, and the error $\Vert r_N \Vert_2^2 \le \sum_{i>N} \lambda_i$. Therefore, $N$ is chosen as the minimal integer such that $ \sqrt{\sum_{i>N} \lambda_i} \le \beta \epsilon e_2(0).$   When $X = \Rbb^M$ and $\rho$ is a discrete measure $\rho = \sum_{i=1}^m \delta_{u^{(i)}}$, $T$ is identified with a matrix $T = \sum_{i=1}^m u^{(i)} \otimes u^{(i)} $. When $\Rbb^M$ is equipped with the Euclidian norm $\Vert \cdot\Vert_2$, the vectors $\varphi_i$ are obtained as the $M$ dominant left singular vectors of the matrix $A = (u^{(1)} , \hdots, u^{(m)}) \in \Rbb^{M\times m}$, with associated singular values $\sigma_i$ such that $\lambda_i = \sigma_i^2.$ When $\Rbb^M$ is equipped with a norm $\Vert v \Vert_X^2 = v^T M_X v = \Vert M_X^{1/2} v \Vert_2^2$, with $M_X$ a symmetric positive definite matrix, the $(\varphi_i , \sigma_i)$ are the left singular vectors and associated singular values of the matrix $M_X^{1/2} A$.
 
 When $\rho$ is a probability measure, we can choose $\bar u = \int_M u d\rho(u)$ and consider instead the covariance  operator $T v = \int_M (u-\bar u)(u-\bar u,v)_X d\rho(u) $ or in the discrete setting, the matrix $T = \sum_{i=1}^m (u^{(i)}-\bar u) \otimes (u^{(i)} - \bar u)$ or the corresponding matrix $A = (u^{(1)}-\bar u, \hdots , u^{(m)}-\bar u)$.
 
 \begin{remark}
For high-dimensional problems ($M\gg 1$), we can rely on randomized methods for estimating near-optimal linear spaces $X_N$, see \cite{Halko2011May} or \cite{balabanov2019randomized} in the context of model order reduction. 
 \end{remark}
 
\paragraph{Worst-case setting}
For $p=\infty$, we can use a (weak) greedy algorithm to determine a near optimal space $X_N = \mathrm{span}\{v_1, \hdots,v_N\}$, as described in Section \ref{sec:lin-approx}, and obtain the vectors $\varphi_1, \hdots, \varphi_N$ by Gram-Schmidt orthonormalization. Provided an   upper bound  $\delta_N(u)$ of $\Vert r_N(u) \Vert_X = \Vert u - P_{X_N} u \Vert_X$, we run the greedy algorithm and select $N$ as the minimal integer such that 
$\sup_{u\in M} \delta_N(u) \le \epsilon \, e_\infty(0) = \epsilon \, \sup_{u\in M} \Vert u \Vert_X$. In a discrete setting, where we 
only have access to samples $u^{(1)}, \hdots, u^{(m)}$ in $M$, we replace $M$ in the above expression by the discrete set $M = \{u^{(1)}, \hdots, u^{(m)}\}$.

\subsection{Control of the error and stability}
Satisfying a prescribed precision for the errors $\epsilon_{i,p}$ or a prescribed bound for Lipschitz constants of maps $f_i$ may be a difficult task for some indices $i\in \{1, \hdots, N\}$. This may require to progressively adapt the set of indices $I$ associated with the encoder, whose coefficients are not approximated, and the subsets of parameters $S_i$ for $i\in I^c$. At one step of the adaptive algorithm, we are given the set $I$ and its complementary set $I^c$ in $\{1,\hdots, N\}$. Some coefficients, with indices denoted by $J \subset I^c$, remain to be approximated with a controlled precision and stability, while  coefficients with indices $I^c \setminus J$  have been already   approximated with success at previous steps. 
At this step, for the approximation of coefficients with indices $J$, we demand that the errors and Lipschitz constants  satisfy 
$$\epsilon_{i,p} \le \bar \epsilon_{i,p} \quad \text{and} \quad   \gamma_i = \Vert f_i \Vert_{i,\gamma} \le \bar \gamma_i, \quad \text{for all } i\in J, $$ 
with suitable bounds $\bar \epsilon_{i,p}$ and $\bar \gamma_i$ defined below.  At the end of the step, if some indices $i\in J$ do not satisfy the above error and stability conditions, we augment the corresponding sets  $S_i$ and eventually  augment the set $I$, as described in     section \ref{sec:description-algo}.

\paragraph{Control of error for $p=2$}
In the mean-squared setting ($p=2$), we want to satisfy 
$$
\sum_{j \in I^c \setminus J} \epsilon_{j,2}^2 +   \sum_{j \in J} \epsilon_{j,2}^2  \le (1- \beta^2) \epsilon^2 e_2(0)^2 .
$$
Therefore, we can define for all $i \in J$
\begin{equation}
    \bar{\epsilon}_{i,2}^2:=   \omega_i ( (1- \beta^2) \epsilon^2 e_2(0)^2 - \sum_{j \in I^c \setminus J} \epsilon_{j,2}^2) \label{bound-bar-eps2}
\end{equation} 
with weights $(\omega_i)_{i \in J}$ such that $\sum_{i\in J}   \omega_i =1$, e.g. $ \omega_i = i^{\alpha} / \sum_{j\in J} j^\alpha$ with $\alpha \ge 0$. The weights $\omega_i$ of the remaining indices $i\in J$ are updated at each step of the algorithm. In the above expression, $\bar \epsilon_{i,2}$ involves the true errors $\epsilon_{j,2}$ for indices $j\in I^c \setminus J$ that have been already approximated with success. A more conservative choice consists in defining  
\begin{equation}
    \bar{\epsilon}_{i,2}^2:=   \omega_i (1- \beta^2) \epsilon^2 e_2(0)^2,
    \label{bound-bar-eps2-conservative}
\end{equation} 
where the weights $(\omega_i)_{i \in J}$ at this step are chosen such that $
\sum_{i \in J} \omega_i = 1 - \sum_{j \in I^c \setminus J} \omega_j$. The weights $\omega_j$, $j\in I^c\setminus J$, correspond to the values determined at the steps where the corresponding coefficients have been approximated with success.
We define  $\omega_i := \omega'_j \big(1 - \sum_{j \in I^c \setminus J} \omega_j \big)$, with $ \sum_{j \in J} \omega'_i = 1$, e.g $ \omega'_i := i^\alpha / \sum_{j \in J} j^\alpha$, $\alpha \geq 0$.
 
\paragraph{Control of error for $p=\infty$}
In the worst case setting ($p=\infty$), we want to satisfy 
$$
    \sum_{j \in I^c \setminus J} \epsilon_{j,\infty} +   \sum_{j \in J} \epsilon_{j,\infty}  \le (1- \beta) \epsilon\, e_\infty(0).
$$
Following the same reasoning as above,   
we can define for each $i\in J$  
\begin{equation} \bar{\epsilon}_{i,\infty}:=  \omega_i ( (1- \beta) \epsilon e_\infty(0) - \sum_{j \in I^c \setminus J} \epsilon_{j,\infty})
\label{bound-bar-epsinf}
\end{equation} 
 with  weights $ (w_i)_{i\in J}$ such that $\sum_{i\in J}  w_i =1$, or for  a more conservative condition, we can define 
\begin{equation}\ \bar{\epsilon}_{i,\infty}:=  \omega'_i ( 1 - \sum_{j \in I^c \setminus J} \omega_j) (1- \beta) \epsilon \, e_\infty(0), 
 \label{bound-bar-epsinf-conservative}
\end{equation} 
where $ \omega'_i := \sum_{j \in J} \omega'_i=1$, for e.g $ \omega'_i := i^\alpha / \sum_{j \in J} j^\alpha$, $\alpha \geq 0$. 

\paragraph{Control of stability}

In order to guarantee that the decoder $D$ is $L$-Lipchitz, we want to satisfy 
$$
1 +  \sum_{j\in I^c\setminus J} \gamma_j^2+  \sum_{j\in J} \gamma_j^2 \le L^2 .
$$  
Therefore, we can define the values of $\bar \gamma_i$ for $i\in J$ as  
\begin{equation}
\bar \gamma_i^2 := \tilde \omega_i (L^2-1 - \sum_{j\in I^c\setminus J} \gamma_j^2),
\label{bound-gamma}
\end{equation}
with $\sum_{i\in J} \tilde \omega_i = 1$, e.g. $ \tilde \omega_i = i^{\alpha} / \sum_{j\in J} j^\alpha$ with $\alpha\ge 0$. The 
 $ \gamma_j$ for $j\in I^c\setminus J $ are the Lipschitz norms $\Vert f_j \Vert_{j, \gamma}$ of the functions $f_j$ that have been determined with success at previous steps of the algorithm. For a more conservative choice, we can set 
\begin{equation}
\bar \gamma_i^2 := \tilde \omega'_i ( 1 - \sum_{j\in I^c\setminus J} \tilde \omega_j) (L^2-1),
\label{bound-gamma-conservative}
\end{equation}
still   with $\sum_{i\in J} \tilde \omega'_i = 1$, and where the $\tilde \omega_j$, $j\in I^c\setminus J$, correspond to the values determined at the steps where the corresponding coefficients have been approximated with success.

 \subsection{Description of the algorithm}\label{sec:description-algo}

Based on the above analysis, we propose the following adaptive algorithm. 

 We first set $I = \{1, \hdots, n\}$ for some $n\ge 1.$ The choice of $n$ may be guided by an a priori estimation of the dimension of the manifold $M$ (e.g. if generated by some map from $\Rbb^n$ to $X$), or it can be determined such that   
  $\Vert id -   P_{X_n} \Vert_p \le  \epsilon_0 \; e_p(0) $  with $\epsilon_0 $  larger than the desired precision $\epsilon$, but sufficiently small to guarantee that the approximation $\bar u + P_{X_n} (u-\bar u)$ provides a reasonable approximation of $u$ for all $u \in M$. 
  
 We then let $J = \{n+1, \hdots, N\}$ be the indices of the coefficients to be approximated. 
We set $S_i = I$ for all $i\in J$ and set $k=n$. 
Then while $J\neq \emptyset$, we perform the following steps 
\begin{itemize}
\item Update the bounds  $\bar \epsilon_{i,p}$ for the errors $\epsilon_{i,p}$, for all $i\in J$, using \eqref{bound-bar-eps2} or  \eqref{bound-bar-eps2-conservative} for $p=2$, or \eqref{bound-bar-epsinf} or  \eqref{bound-bar-epsinf-conservative} for $p=\infty$.
\item Update the bounds ${\bar{\gamma}}_i$ for the Lipschitz constants $\Vert f_i \Vert_{i,\gamma}$, for all $i\in J$, using \eqref{bound-gamma} or \eqref{bound-gamma-conservative}. 
\item For all $i\in J$,
\begin{itemize}
\item Compute a polynomial $f_i$ (see Section \ref{sec:approx-poly}),
\item Compute the error $\epsilon_{i,p} = \Vert a_i - g_i\circ E \Vert_p$ and Lipschitz norm
 $ \gamma_i = \Vert f_i \Vert_{i,\gamma}$,
\item If $\epsilon_{i,p} \le \bar{\epsilon}_i$ and $\gamma_i \le  {\bar{\gamma}}_i$, then 
set $J \leftarrow J \setminus \{i\}$, otherwise 
set $S_i \leftarrow S_i \cup \{k+1\}.$
\end{itemize}
\item If $k+1 \in J$, then set $I \leftarrow I \cup \{k+1\}$ and  $J \leftarrow J \setminus \{k+1\}$.
\item Set $k\leftarrow k+1$.
\end{itemize}

\subsection{Polynomial approximation}\label{sec:approx-poly}

In this section, for a given $i \in I^c,$ we discuss the approximation of the coefficient $a_i(u) := y$ by a polynomial  $f_i(x)$ 
of variables $x := (g_j(a(u)))_{j \in S_i} \in \Rbb^{d}$ with $d:= \vert S_i \vert $.
Note that $y$ may not be a function of $x$ but may be approximated  sufficiently well by a function $f_i(x)$ in order to reach the target precision $\bar \epsilon_{i,p}$.  

In practice, we have access to samples $y^{(k)} = a_i(u^{(k)}) $ and  $x^{(k)} = (g_j(a(u^{(k)})))_{j=1}^d$, $1\le k \le m.$ We can then   estimate the function $f_i$ from the samples $\{(x^{(k)} , y^{(k)}) : 1\le k \le m\}$, e.g. by least-squares minimization 
$$
\min_{f_i \in \mathcal{P}} \, \sum_{k=1}^m \vert f_i(x^{(k)}) - y^{(k)} \vert^2 
$$
with $\mathcal{P}$ some set of polynomials over $\Rbb^d$, eventually constructed   adaptively.  The variable $x$ and the corresponding samples $x^{(k)}$    belong to the set $B := \{(g_j(a(u)))_{j \in S_i} : u\in M \}$ which may be a non trivial domain of $\Rbb^d$ but more importantly, $B$ is a set of Lebesgue measure zero in $\Rbb^d$ for $d> n.$  However,  approximation over such sets, even for a finite number  of samples, is still feasible with polynomials.

\begin{remark}
With this approach, we have a direct control of the error $\epsilon_{i,p}= \Vert a_i - g_i(a) \Vert_p = \Vert  y - f_i(x) \Vert_p$ with $x = (g_j(a))_{j\in S_i}$, or more precisely of their empirical counterparts. However, in a mean-squared setting, it is possible to rely on (cross-)validation methods to estimate the error $\epsilon_{i,p}.$ Another approach would consist in estimating $f_i$ from  samples $x^{(k)} = (a_j(u^{(k)}))_{j\in S_i}$ of the true coefficients $a_j(u) = (\varphi_i , u )_X$. The interest is that the estimation of the functions  $f_i$ can in principle be parallelized, since it does not require the knowledge of functions $g_j$, $j\in S_i$.   Also, there is a possible benefit since variables $x$ may now belong to a set with nonzero  Lebesgue measure, that could make the estimation of $f_i$ more robust. However, the control of errors and stability   requires the knowledge of the functions  $g_j$, $j\in S_i$, that makes parallelization not possible. This approach is not further considered in this paper. 
\end{remark}

Since the dimension $d$ is possibly large, structured polynomial approximations are required. 

\paragraph{Sparse approximations}

One standard approach is to use sparse polynomial approximation by considering 
$$
\mathcal{P} = \Pbb_\Lambda := \mathrm{span}\{x^\lambda := x_1^{\lambda} \hdots x_d^{\lambda_d} : \lambda \in \Lambda\},
$$
with $\Lambda$ a structured set of multi-indices in $\Nbb^d.$ 
A set $\Lambda$ is downward closed if for each $\lambda \in \Lambda$, all $\beta\in \Nbb^d$ such that $\beta \le \lambda$ belong to $\Lambda.$ For a downward closed set $\Lambda$, it holds  
$$
\Pbb_\Lambda  = \mathrm{span}\{\phi_\lambda(x) := \phi_{\lambda_1}^{1}(x_1)  \hdots \phi_{\lambda_d}^{d}(x_d) : \lambda \in \Lambda\}
$$
whatever the choice of univariate polynomial bases $\{\phi_{k}^{j}\}_{k\ge 0}$, where $\phi_{k}^{j}$ is a polynomial of degree $k$ in the variable $x_j$. 
Classical sets of multi-indices in high dimension include 
\begin{itemize}
\item Set with bounded total degree $p$: $\Lambda = \{\lambda\in \Nbb^d : \sum_{j=1}^d \lambda_j \le p\}$, with $\Lambda = \frac{(p+d)!}{p!d!}$,
\item Set with bounded partial degree $p$ and order of interaction $l$: $\Lambda = \{\lambda\in \Nbb^d : \lambda \le p , \vert \{j : \lambda_j \neq 0\} \vert \le l\}$,
\item Hyperbolic cross set with degree $p$:  $\Lambda = \{ \lambda \in \Nbb^d  \, : \, \prod_{j=1}^d (\lambda_j + 1) \leq p+1  \}$,  with $ | \Lambda | \sim p \text{log}(p+1)^d $.
\end{itemize}
The approximation can be further sparsified using a greedy algorithm or a sparsity-inducing regularization (e.g. $\ell_1$ regularization and LARS-homotopy algorithm), which yields a sequence of subsets of $\Lambda$, one of which being selected using model selection method (e.g., using cross-validation error estimation).   

\paragraph{Low-rank approximations}
An alternative approach to sparse polynomial approximation is to consider hierarchical low-rank approximations (or tree tensor networks), that consists in taking for $\mathcal{P}$ a set of low-rank tensors in the tensor product polynomial space $ \Pbb_{p} \otimes \hdots \otimes \Pbb_{p} = \Pbb_{p}^{\otimes d}.$ These approximations can be estimated using dedicated learning algorithms, see \cite{grelier:2018,michel2022}.

\paragraph{Choice of bases}
 For the stability of  least-squares approximation, it is important to work with orthonormal or near to orthonormal polynomial bases. 
The measure $\nu$ on $B$ is the push-forward measure of the measure $\rho$ on $M$ through the map which associates to $u\in M$ the vector $(g_j(E(u))_{j\in S_i}$. In practice, we only have access to samples $x^{(k)}$ of this measure. Therefore,  an orthonormal polynomial basis of $L^2_\nu(B)$ is out of reach in practice.  
We here propose to consider the domain $\Gamma = \Gamma_1 \times \hdots \times \Gamma_d \supset B$ such that $\Gamma_j = [\min_{u\in M} g_j(E(u)) , \max_{u\in M} g_j(E(u))]$, which can be estimated with samples $x_j^{(k)}$ of $g_j(E(u^{(k)}))$. Then we equip $\Gamma$ with the uniform measure and consider associated (tensor products of) Legendre polynomials. 
\begin{remark}
Tensor products of univariate Chebychev polynomials could be considered as well, that could be more relevant for a control of error   in uniform norm. We could also estimate the marginals $\nu_j$ of $\nu$ from samples $x_{j}^{(k)}$, and construct orthonormal polynomials with respect to these estimated measures. This may further improve the stability of least-squares approximation.
\end{remark}

\section{Numerical experiments}\label{sec:experiments}



We now illustrate the performance of the proposed method on three different benchmarks. In all benchmarks, we generate a  set of $m$ training samples $u_1,\dots,u_m$ in $X = \mathbb{R}^{D}$, which we equip with the euclidian norm.

For compositional polynomial networks (CPN), we use compositions of  sparse polynomials (CPN-S) or low-rank polynomials using tree tensor networks (CPN-LR). We choose $\bar u$, $X_n$ and $X_N$ based on empirical PCA on the training samples. In all experiments, and unless otherwise stated, we set \blue{$\alpha=1$}, \magenta{$\beta=1/\sqrt{2}$} and $L=100$, to ensure that the decoder is robust, i.e that it is not too sensitive to small pertubations in the input.

We will first compare with a method without compositions, using for the $g_i$ either sparse polynomials (Sparse) or low-rank polynomials using tree tensor networks (Low-Rank).

For both CPN-S and Sparse, we use hyperbolic cross sets for defining the background polynomial spaces $\mathcal{P}$.  We use  sparsity-inducing $\ell_1$-regularization and a LASSO-LARS homotopy algorithm that procudes a collection of candidate polynomial subspaces. For each subspace from this path, we re-estimate a classical least-squares approximation and we rely on cross-validation (leave-one-out) for model selection. For CPN-LR and Low-Rank, we use adaptive tree tensor networks using the library \texttt{tensap} \cite{tensap}, implementing  the algorithm from \cite{grelier:2018,michel2022}. We also rely on validation for model selection. 

For comparison, we will also  consider the quadratic manifold approximation method of  \cite{Barnett_2022, Geelen_2023} (Quadratic), with a decoder of the form 
$$D(a) = \bar u + \sum_{i=1}^n a_i \varphi_i + \sum_{1 \le i \le j \le n} a_i a_j \varphi_{i,j}$$
where $\bar u$ and the orthonormal vectors $\varphi_1, \dots, \varphi_n$ are chosen based on empirical PCA. 
The resulting approximation is in an affine space of dimension $N = n(n+3)/2.$ The vectors $ \varphi_{i,j}$, $1 \le i \le j \le n$, are solutions of the optimization problem 
$$
\min_{(\varphi_{i,j})} \sum_{k=1}^m \Vert u_k - D(E(u_k)) \Vert_X^2
$$
where $E(v) = ((v-\bar u,\varphi_i)_X)_{i=1}^n$. \red{We consider the basic version where the functions $\varphi_1, \dots,\varphi_n$ are taken as the first  components of the PCA, and also the improved algorithm from \cite{schwerdtner2024greedyconstructionquadraticmanifolds} (later called Greedy-Quadratic) where the functions 
$\varphi_1, \dots,\varphi_n$ are selected greedily among the  $N$ first components of the PCA. This allows for a fair comparison with our approach, which also selects greedily the set $I\subset \{1,\dots N\}$ that defines $X_n$ based on stability and approximation properties.}


We also compare to the approach   of \cite{geelen2024} using for the maps $g_i$ additive polynomials (sums of univariate  polynomials, i.e. $g_i(a) = \sum_{l=1}^n p_{i,l}(a_l)$ with $p_{i,l}$ a polynomial of degree $p$). We rely on an alternating minimizing  method (AM), which successively optimize over polynomials $(p_{i,l})$ and over the orthonormal functions $\varphi_i$, $1\le i\le N$, the latter optimization problem being an orthogonal Procrustes problem. The method is denoted (Additive-AM).

For both (Quadratic) and (Additive-AM) methods, we introduce an  $\ell_2$ regularization, as originally done by the authors, and use a grid search method to determine an optimal regularization term. 

\blue{Unless stated otherwise, all numerical experiments are performed in the mean-squared setting and we use as error measure the relative error}
\magenta{
$$
\text{ RE } = \frac{\sqrt{\sum \limits_{k=1}^m \Vert u_k - D(E(u_k)) \Vert_X^2}}{\sqrt{\sum \limits_{k=1}^m \Vert u_k \Vert_X^2}}
$$}
and denote by $ \text{RE}_{\text{train}} $ and $ \text{RE}_{\text{test}} $ the relative errors over the training and test sets, respectively. 

An implementation of the proposed method is available at  \href{https://github.com/JoelSOFFO/CPN_MOR.git}{https://github.com/JoelSOFFO/CPN\_MOR}.

\subsection{Korteweg-de Vries (KdV) equation}

We consider a classical benchmark in nonlinear dispersive waves which  describes how waves propagate through shallow water. More precisely, we consider the propagation 
of a soliton in a one-dimensional domain with periodic boundary conditions, described by the 
 Korteweg-de Vries (KdV) equation 

\begin{align*}
    \frac{\partial u}{\partial t} + 4 u \frac{\partial u}{\partial x} + \frac{\partial^3 u}{\partial x^3} = 0,
\end{align*}
where $ u : [-\pi, \pi] \times [0, 1] \to \mathbb{R}$ denotes the flow velocity of the wave. The initial condition given by $ u_0(x) = 1 + 24\,  \text{sech}^2(\sqrt 8 x) $. We consider the manifold  $M = \{u(t) : t\in [0,1]\}$. 
Functions are evaluated on a grid of $D= 256$ equispaced points over the spatial domain $ [-\pi, \pi] $ and identified with vectors in $X \in \Rbb^D$. The samples are collected by evaluating the solution $u(t)$ every $ \Delta t = 0.0002 $ time units over the time domain $ [0, 1] $, therefore resulting into $5001 $  samples (initial condition included). 
\red{We consider the same experimental setup as in \cite{geelen2024}, and investigate the method ability for  extrapolation.}
The first $m= 1001 $ samples, corresponding to times $ t \in [0, 0.2] $ are taken as training data, and we use as test samples the remaining $4000$ samples corresponding to times $t\in [0.2,1]$. 

We perform various experiments on KdV to illustrate the properties of the proposed method. 

\paragraph{Comparison with other methods}

\begin{table}[h]
\centering 

\begin{tabular}{|c|c|c|c|c|c|}
\hline
Method & $p$  & $n$ & $N$ & $ \text{RE}_{\text{train}} $ & $ \text{RE}_{\text{test}} $ \\ 
 \hline
 Linear & / &  2  & / & \magenta{$6.63 \times 10^{-1}$}  & \magenta{$ 6.91 \times 10^{-1} $ }\\ 
 \hline
 Quadratic &  2 & 2 & 5 & $ 5.33 \times 10^{-1}$  & \magenta{$ 5.66 \times 10^{-1}$} \\
 \hline
 \red{Greedy-Quadratic} & \red 2 & \red 2 & \red 5  & \red{$5.33 \times 10^{-1}$} & \red{$ 5.66 \times 10^{-1}$}
 \\
 \hline
Additive-AM & 5 & 2 & 43 & \magenta{$ 2.10 \times 10^{-1}$}  & \magenta{$ 5.47 \times 10^{-1}$}  \\
 \hline
 Sparse &  5 & 2 & 43 & \magenta{$ 1.73 \times 10^{-1}$}  & \magenta{$ 1.85 \times 10^{-1} $}  \\
 \hline
 Low-Rank & 5 & 2 & 43 & \magenta{$7.64 \times 10^{-2}$} & \magenta{$8.24 \times 10^{-2}$} \\
 \hline
 CPN-LR ($\epsilon = 10^{-4}$) &  5 & 2 & 43 & \magenta{$ 6.85 \times 10^{-5}$} & \magenta{$7.06 \times 10^{-5}$} \\ 
 \hline
\end{tabular}
\caption{(KdV) Comparison of methods for the same manifold dimension $ n = 2 $. For CPN, we use low-rank polynomials.}
\label{tab:kdv_comparison_table}
\end{table}

Figures \ref{fig:kdv_viz_solution} and \ref{fig:kdv_snapshots} illustrate the predicted solutions for the different methods. 
We observe that the solutions given by CPN predict very accurately the true solution over the whole time interval.
\red{It is important to mention that even though the Quadratic and Greedy-Quadratic approaches give similar results for $n=2$, we observe in practice that for this experiment and in \ref{Burgers-equation}, the Greedy-Quadratic outperforms the Quadratic method as the dimension $n$ increases.
}

\begin{figure}[h]
    \centering
    \subfigure[Exact solution]{\includegraphics[width=0.3\textwidth]{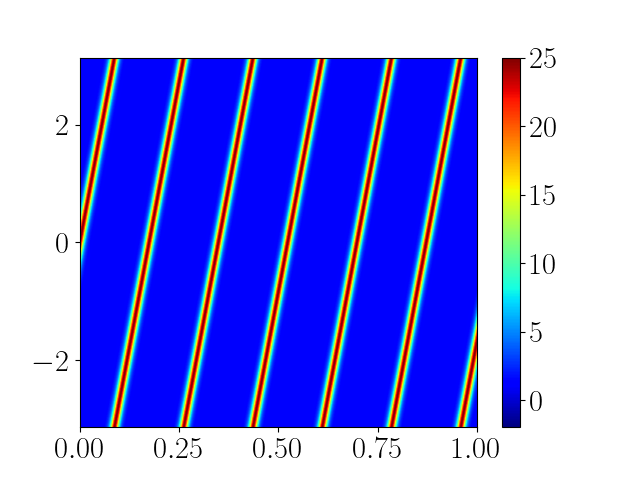}}
    \subfigure[Linear]{\includegraphics[width=0.3\textwidth]{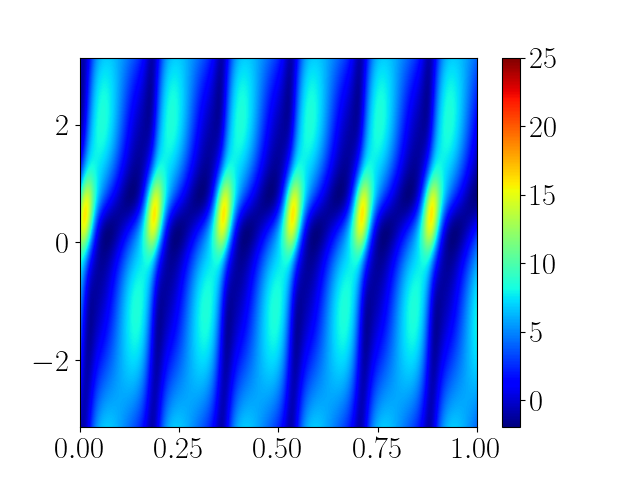}} 
    \subfigure[Quadratic]{\includegraphics[width=0.3\textwidth]{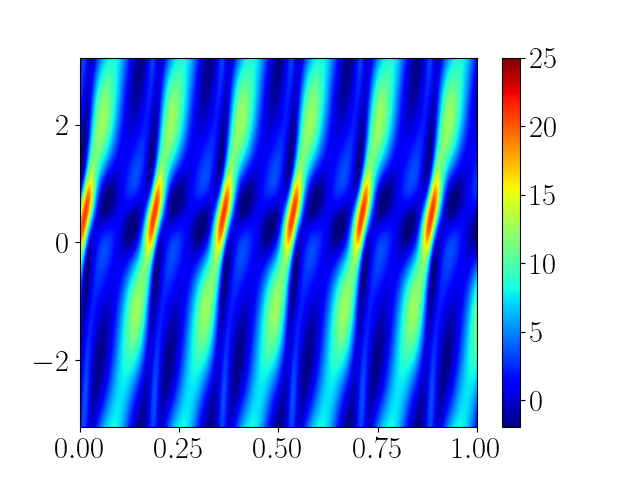}} 
    \subfigure[Additive-AM]{\includegraphics[width=0.3\textwidth]{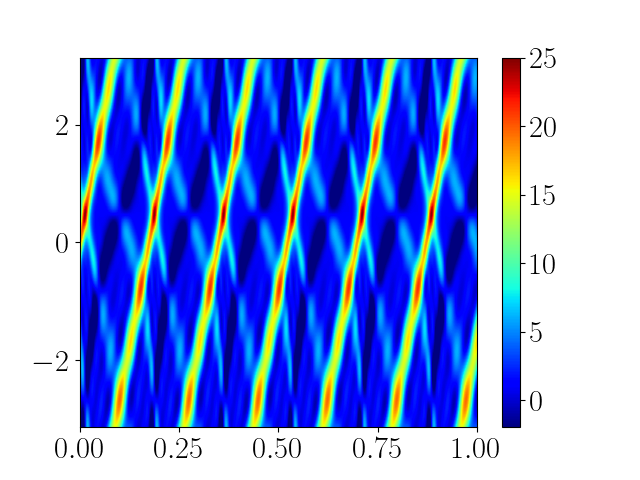}} 
    \subfigure[Sparse]{\includegraphics[width=0.3\textwidth]{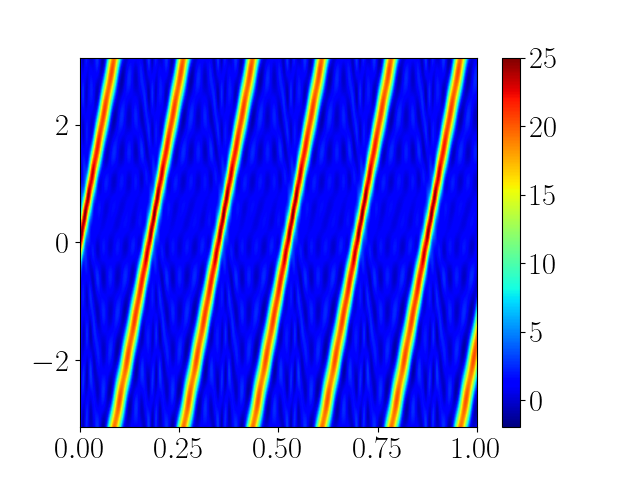}} 
    \subfigure[CPN-LR]{\includegraphics[width=0.3\textwidth]{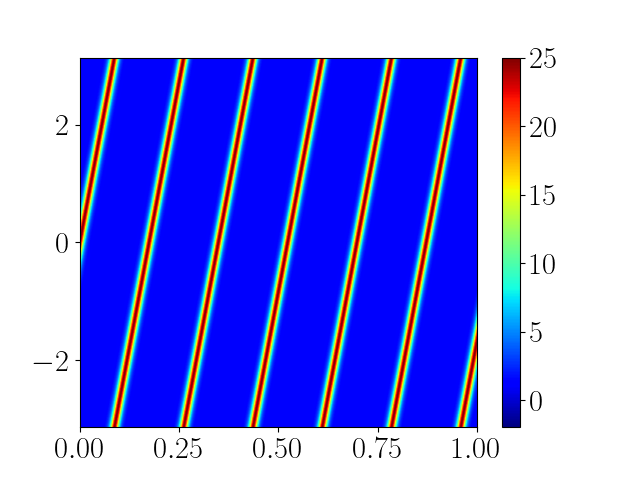}} 
    \caption{(KdV) Predictions for different methods, with $ n = 2 $.}
    \label{fig:kdv_viz_solution}
\end{figure}

\begin{figure}
    \centering
    \subfigure[$ u(\cdot, t) $ at $ t = 0.5 $]{\includegraphics[width=0.4\textwidth]{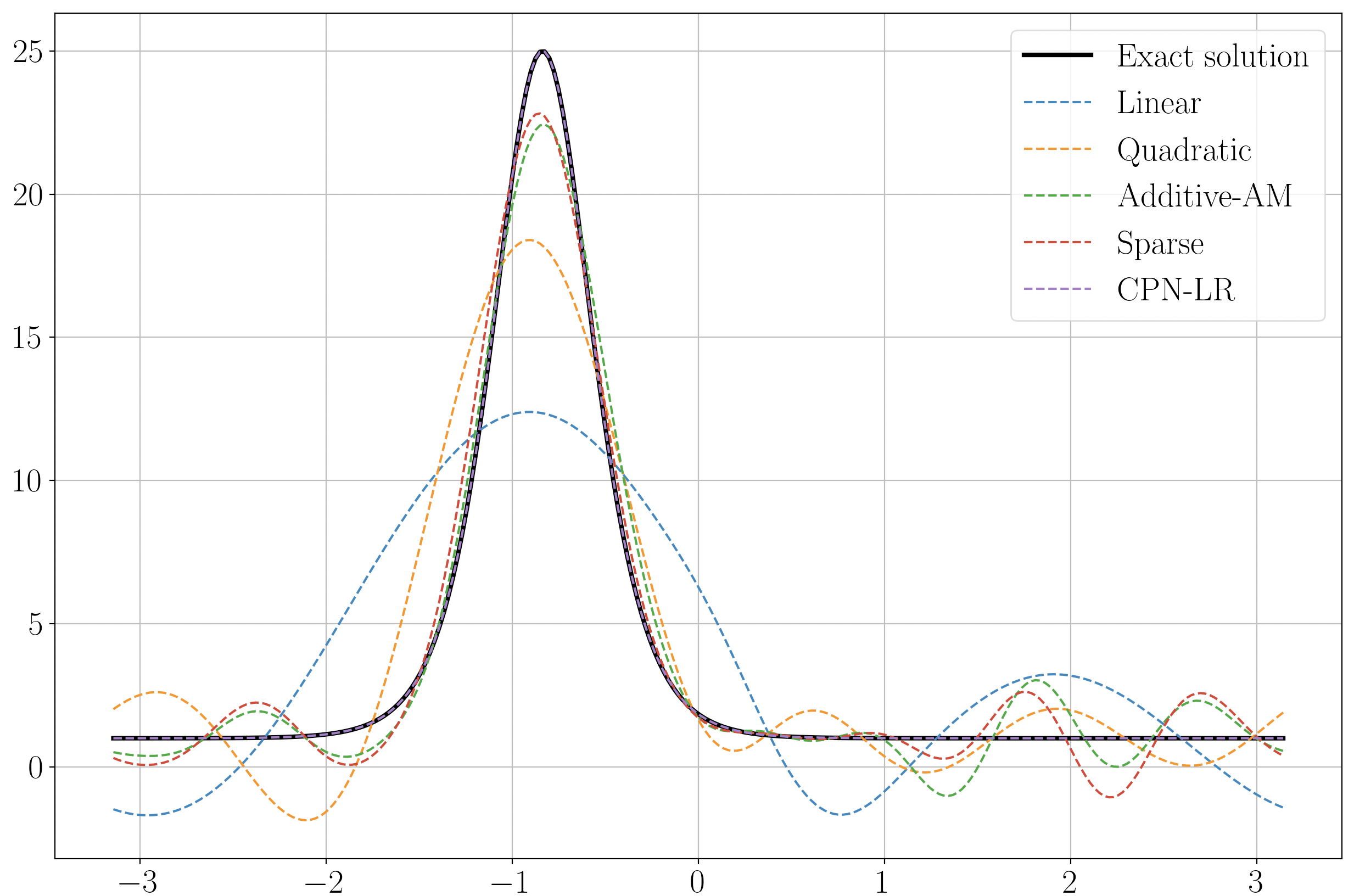}} 
    \subfigure[$ u(\cdot, t) $ at $ t = 1 $]{\includegraphics[width=0.4\textwidth]{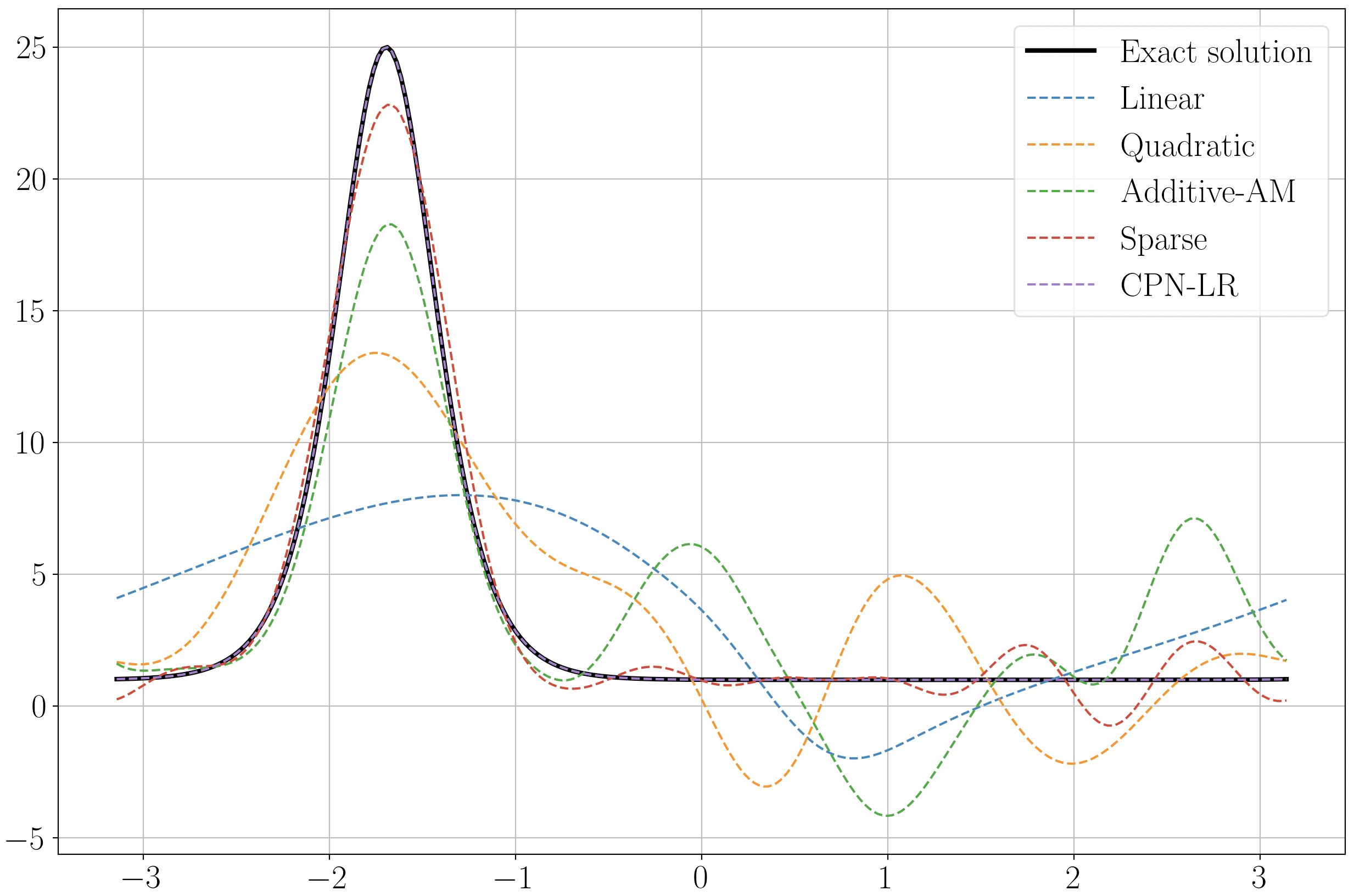}} 
    \caption{(KdV) Comparison between methods for two snapshots, with  $ n = 2 $.}
    \label{fig:kdv_snapshots}
\end{figure}

\paragraph{Influence of the polynomial degree}

In Table \ref{tab:different_p}, we illustrate the influence of the polynomial degree $p$ on the results of CPN. We observe that a high value of $p$ allows to capture higher nonlinearities in the relations between coefficients $a_i(u)$, hence a lower manifold dimension $n$.



\begin{table}[h]
\centering 

\begin{tabular}{|c|c|c|c|c|c|}
\hline
{Method} & $p$ & $n$ & $N$ & $ \text{RE}_{\text{train}} $ & $ \text{RE}_{\text{test}} $ \\ 
 \hline
\centering CPN-S & 3  & \magenta{8} & 43 & \magenta{$ 7.44 \times 10^{-5}$}  & \magenta{$ 7.68 \times 10^{-5}$} \\ \cline{2-6}

    & 4 & 7 & 43 & \magenta{$ 7.61 \times 10^{-5}$} & \magenta{$ 7.82 \times 10^{-5}$} \\ \cline{2-6}

    & 5 & 5 & 43 & \magenta{$ 7.30 \times 10^{-5} $} & \magenta{$ 7.54 \times 10^{-5} $} \\
 \hline
 \centering CPN-LR & 3 & 3 & 43 & \magenta{$ 6.77 \times 10^{-5}$} & \magenta{$7. \times 10^{-5}$} \\ \cline{2-6}

    & 4 & 2 & 43 & \magenta{$6.90 \times 10^{-5}$} & \magenta{$ 7.02 \times 10^{-5}$}  \\ \cline{2-6}

    & 5 & 2 & 43 & \magenta{$ 6.85 \times 10^{-5}$} & \magenta{$7.06 \times 10^{-5}$} \\
 \hline
\end{tabular}
\caption{(KdV) Results of CPN with different degrees $ p $ for $ \epsilon = 10^{-4} $.}
\label{tab:different_p}
\end{table}

\paragraph{Behavior of the algorithm}

In Table  \ref{tab:sparse_table}, we illustrate the results for various target precisions $\epsilon$. We observe that the algorithm returns an approximation satisfying the desired precision, with increasing dimensions $n$ and $N$ and number of compositions as $\epsilon$ decreases.  
The results also suggest that using low-rank approximation can lead to a smaller $ n $ than sparse polynomial approximation, due to the higher approximation power of the former. 
\blue{
Figure \ref{fig:pcnvssparse} shows the \textit{coefficient-wise errors} for Sparse and for CPN-S, that clearly illustrate the power of using compositions of polynomial maps. 
These errors are defined for $i\in I^c$ by
$$
e_i = \frac{\sqrt{\sum_{k=1}^m |a_i^k - g_i(a^k) |^2}}{ \sqrt{\sum_{k=1}^m \| u_k\|^2_X}},
$$
where the $g_i$ are either compositions of sparse polynomials maps (\textit{CPN-S}) or sparse polynomials of degree $p$ (\textit{Sparse}), and for the $k$-th sample, $a^k = (a_i^k)_{i \in I}$. 
}
 
\begin{table}[h]
\centering

\begin{tabular}{|c|c|c|c|c|c|c|}
\hline
{Precision} & {Method} & $n$ & $N$ & $N_{comp}$   & $ \text{RE}_{\text{train}} $  & $ \text{RE}_{\text{test}} $\\ \hline

\centering $\epsilon = 10^{-1}$ & CPN-S & 2 & 15 & \magenta 1 & \magenta{$6.26 \times 10^{-2}$} & \magenta{$6.48 \times 10^{-2}$} \\ \cline{2-7}
                                
                                &  CPN-LR & 2 & 15 & 1 & \magenta{$6.12 \times 10^{-2}$} & \magenta{$6.34 \times 10^{-2}$} \\ \hline
$\epsilon = 10^{-2}$ &  CPN-S & 3 & 25 & \magenta 2 & \magenta{$ 6.56 \times 10^{-3} $} & \magenta{$ 6.81 \times 10^{-3} $}\\ \cline{2-7}
                                
                        &  CPN-LR& 2 & 25 & 1 & \magenta{$ 6.02 \times 10^{-3}$} & \magenta{$ 6.23 \times 10^{-3}$} \\ \hline
$\epsilon = 10^{-3}$ &  CPN-S & 3 & 34 & \magenta 3 &  \magenta{$ 6.72 \times 10^{-4} $} & \magenta{$ 6.96 \times 10^{-4} $}\\ \cline{2-7}
                                
                        &  CPN-LR & 2 & 34 & \magenta 2 & \magenta{$ 6.17 \times 10^{-4}$} & \magenta{$ 6.38 \times 10^{-4}$} \\ \hline
$\epsilon = 10^{-4}$ &  CPN-S & 5 & 43 & \magenta 2 & \magenta{$ 7.30 \times 10^{-5} $} & \magenta{$ 7.54 \times 10^{-5}$}  \\ \cline{2-7}
                                
                        &  CPN-LR & 2 & 43 & \magenta 3 & \magenta{$ 6.85 \times 10^{-5}$} & \magenta{$7.06 \times 10^{-5}$} \\ \hline
$\epsilon = 10^{-5}$ &  CPN-S & 6 & 52 & \magenta 3 & \magenta{$ 6.70 \times 10^{-6}$} & $ 6.91 \times 10^{-6}$ \\ \cline{2-7}
                                
                        &  CPN-LR & 3 & 52 & \magenta 2 & 
 \magenta{$ 6.83 \times 10^{-6} $} & \magenta{$ 7.06 \times 10^{-6} $} \\ \hline
                                

\end{tabular}
\caption{(KdV) Comparison between CPN-S (sparse approximation) and CPN-LR (low-rank approximation) for $ p = 5 $ and various different values of target precision $\epsilon$ \magenta{(mean-squared setting)}. 
$N_{comp}$  indicates the maximum number of compositions.}
\label{tab:sparse_table}
\end{table}

\begin{figure}
    \centering
    \includegraphics[scale=0.3]{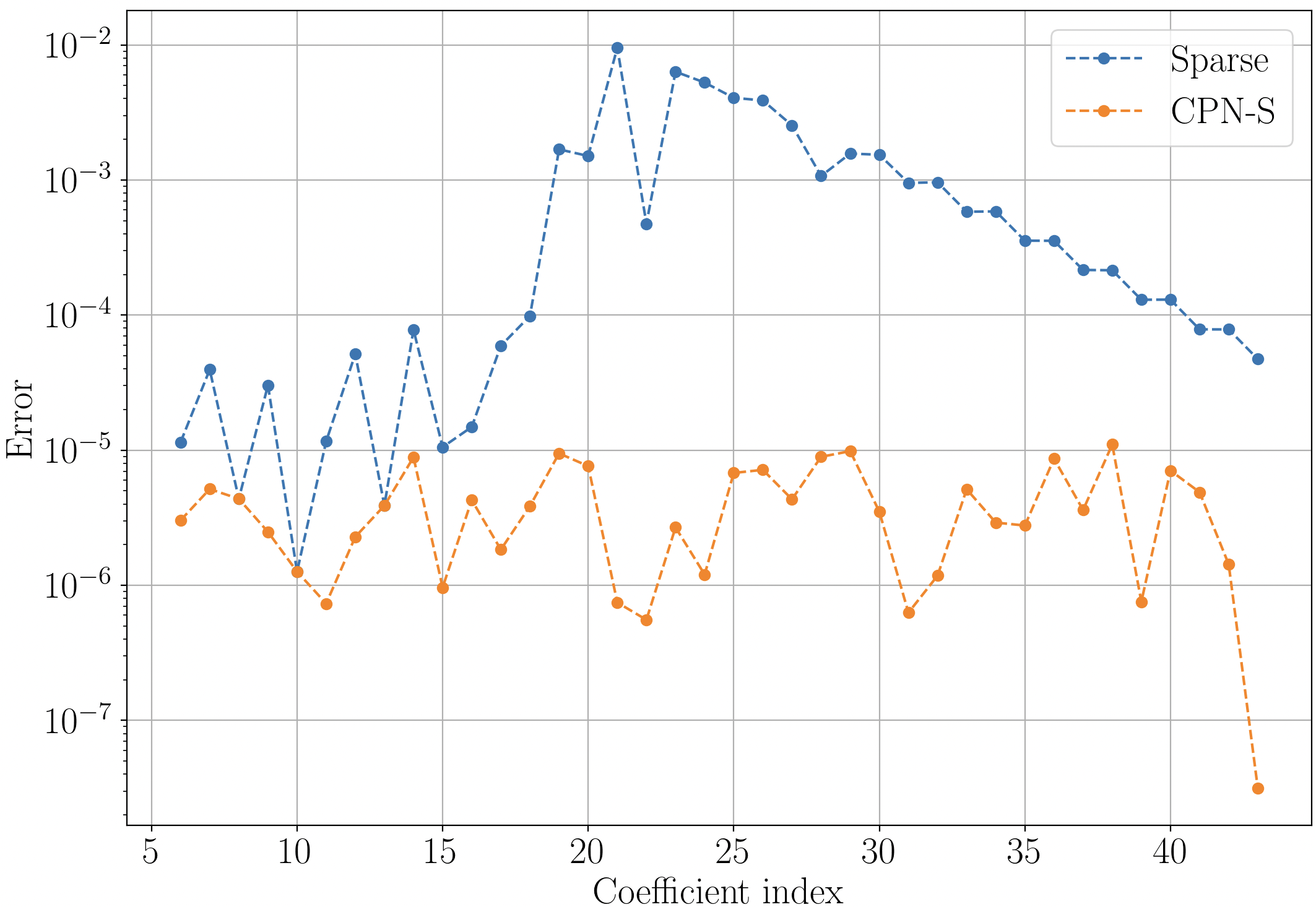}
    \caption{(KdV) Coefficient-wise errors for Sparse and CPN-S, with $p=5$ and $\epsilon=10^{-4}$.}
    \label{fig:pcnvssparse}
\end{figure}

 For CPN-S, the graphs of compositions for coefficients $a_{10}$, $a_{21}$ and $a_{41}$ can be visualized in Figure \ref{fig:kdv_graphs}. The coefficient $a_{10}$ is simply approximated in terms of the parameters $a$, whilst the coefficients $ a_{21} $ and $ a_{42} $ are expressed as compositions of polynomials. The full learning procedure for $ \epsilon = 10^{-4} $ and $ p = 5 $ is detailed in Table \ref{tab:kdv_learning_process}.

\begin{figure}
    \centering
    \subfigure[$a_{10}$]{\includegraphics[width=0.25\textwidth]{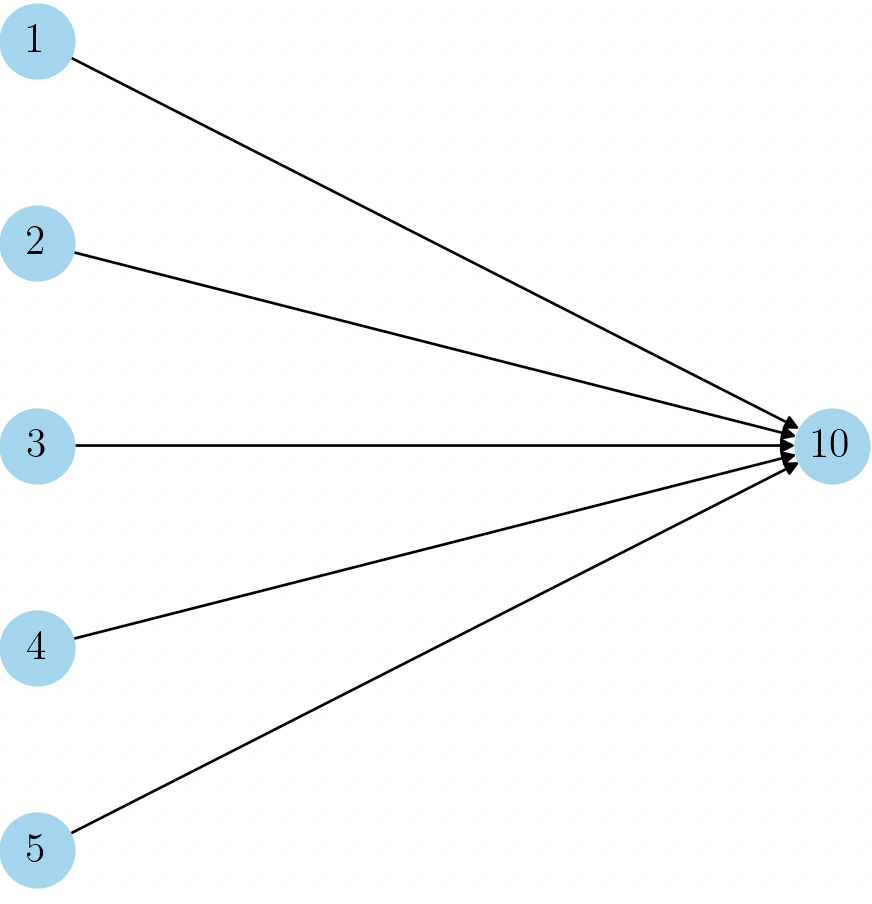}} 
    \subfigure[$ a_{21} $]{\includegraphics[width=0.3\textwidth]{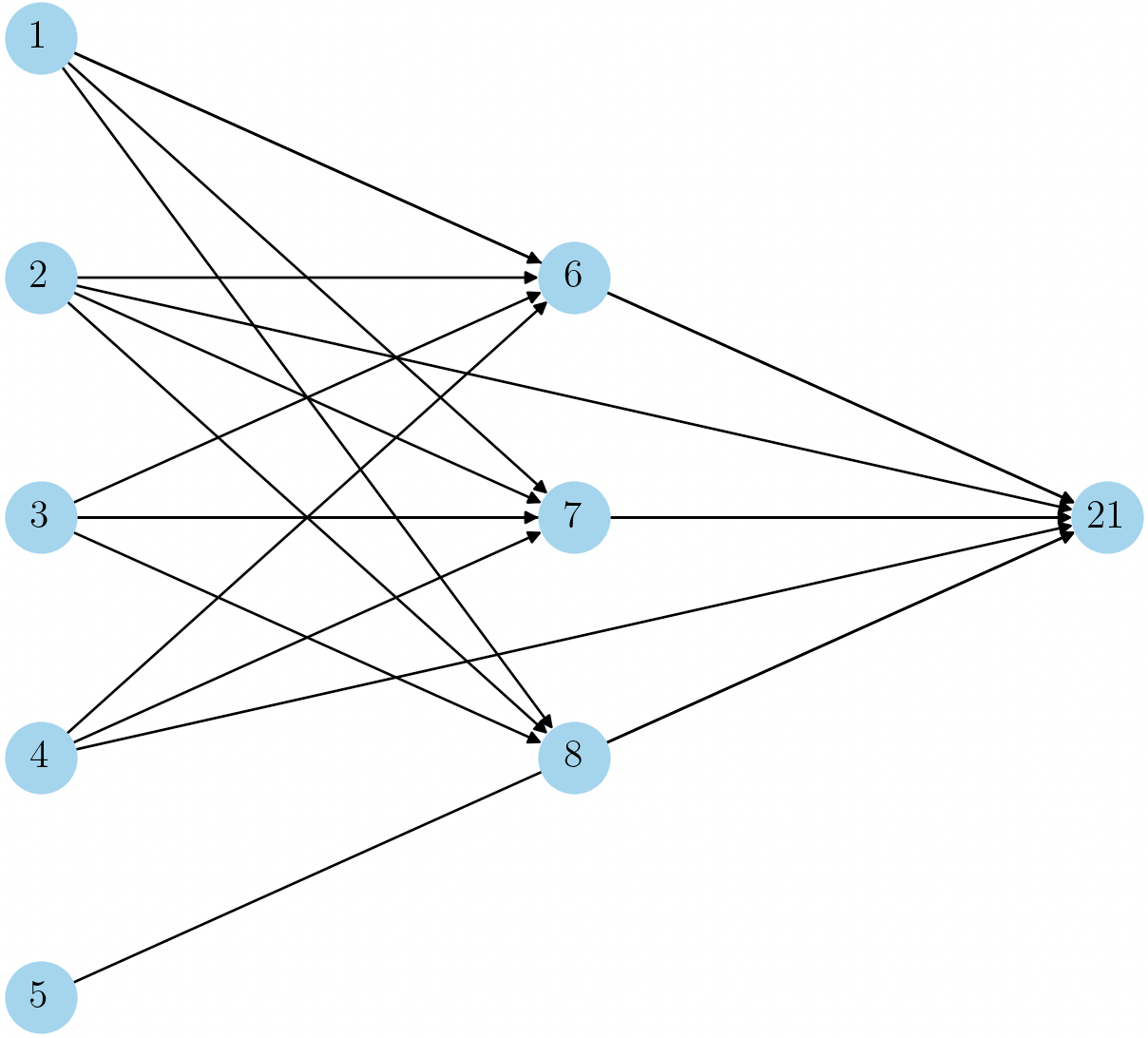}} 
    \subfigure[$ a_{42} $]{\includegraphics[width=0.3\textwidth]{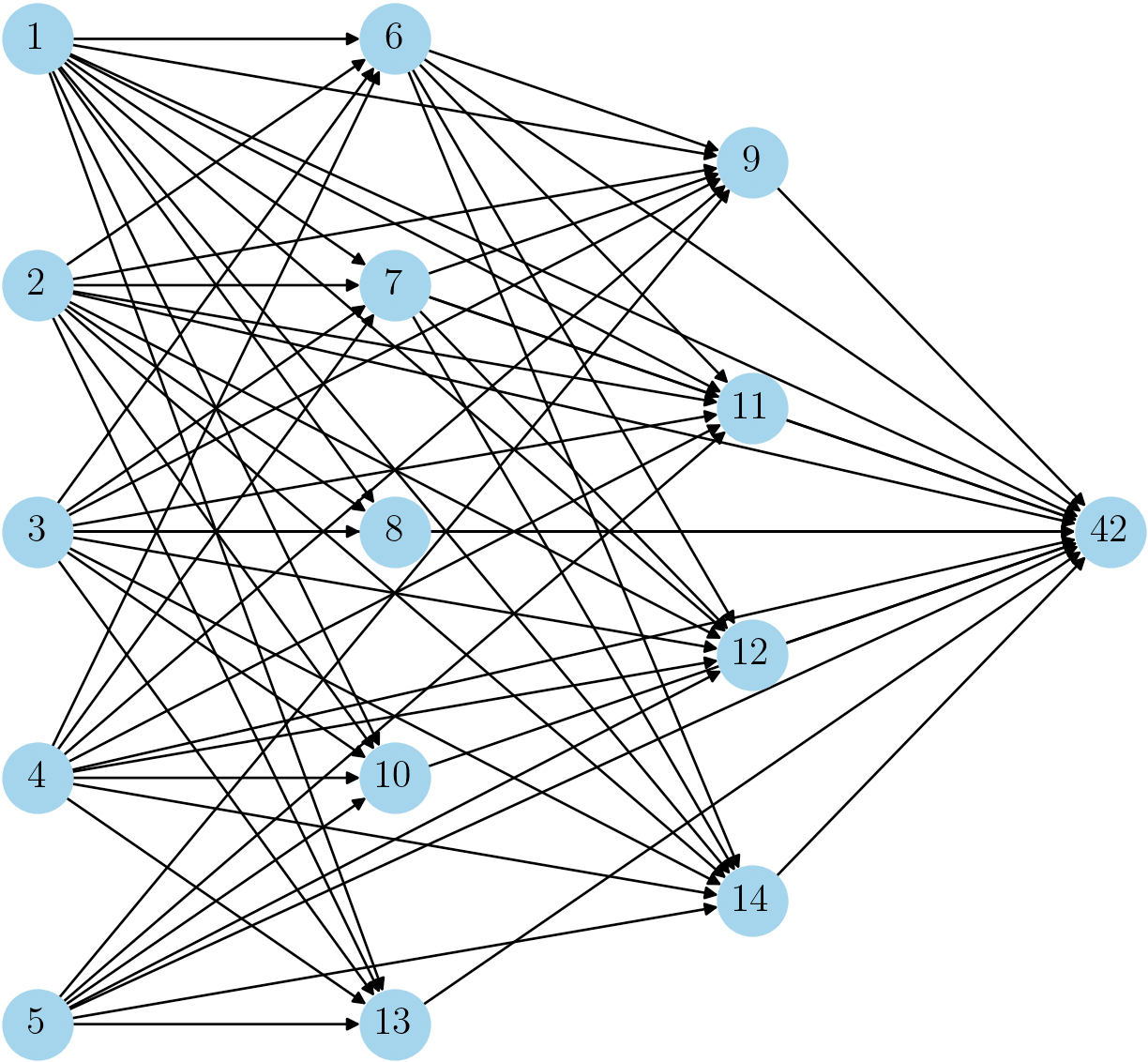}} 
    \caption{(KdV) Compositional networks for different coefficients, using CPN-S with $\epsilon=10^{-4}$ and $p=5$.}
    \label{fig:kdv_graphs}
\end{figure}

\begin{table}[H]
\centering 

\begin{tabular}{|c|c|c|}
\hline
\textbf{Step} & \textbf{Indices of input coeffs.} & \textbf{Indices of learnt coeffs.} \\
\hline
1 & 1 & / \\
\hline
2 & 1, 2 & / \\
\hline
3 & 1, 2, 3 & / \\
\hline
4 & 1, 2, 3, 4 & \magenta 6 \\
\hline
5 & 1, 2, 3, 4, 5 & \magenta{8, 10, 13} \\
\hline
6 & 1, 2, 3, 4, 5, \textbf 6 & \magenta{7, 9, 11, 12, 14, 15, 17} \\
\hline
7 & 1, \ldots, 5, \textbf 6, \textbf 7 & \magenta{16, 18, 19, 20} \\
\hline
8 & 1, \ldots, 5, \textbf{6, 7, 8} & \magenta{21, 22, 24, 25, 27, 29} \\
\hline
9 & 1, \ldots, 5, \textbf{6, 7, 8, 9} & \magenta{23, 26, 28} \\
\hline
10 & 1, \ldots, 5, \textbf{6, \ldots, 10} & \magenta{30} \\
\hline
11 & 1, \ldots, 5, \textbf{6, \ldots, 11} & \magenta{32, 33, 34, 36, 38} \\
\hline
12 & 1, \ldots, 5, \textbf{6, \ldots, 12} & \magenta{31, 35, 37} \\
\hline
13 & 1, \ldots, 5, \textbf{6, \ldots, 13} & \magenta{39, 40, 41} \\
\hline
14 & 1, \ldots, 5, \textbf{6, \ldots, 14} & \magenta{42} \\
\hline
15 & 1, \ldots, 5, \textbf{6, \ldots, 15} & \magenta{43} \\
\hline
\end{tabular}
\caption{(KdV) Learning procedure of CPN-S for $ p = 5 $ and $ \epsilon = 10^{-4}$. To reach the target precision,  $N=43$ is required. Coefficients are progressively learnt throughout the different steps of the algorithm. At step $ j $, $ a_j $ is added as input variable. If already learnt at a previous step, its approximation is used instead (indices in bold), leading to the compositional structure. The final dimension $ n = 5 $ corresponds to the number of coefficients that were not learnt during the process.}
\label{tab:kdv_learning_process}
\end{table}

\paragraph{Stability of the decoder}

In Table \ref{tab:different_L}, we illustrate the influence of the prescribed upper bound $L$ for the Lipschitz constant, with prescribed precision $\epsilon=10^{-4}$ and polynomial degree $p=5$. The algorithm was able to construct an approximation satisfying the target precision and stability conditions.   
As expected, we observe that imposing a smaller Lipschitz constant results in a higher manifold dimension $n$. 

\begin{table}[H]
\centering 

\begin{tabular}{|c|c|c|c|c|c|}
\hline
{Method} & $L$ & $n$ & $N$ & $ \text{RE}_{\text{train}} $ & $ \text{RE}_{\text{test}} $ \\ 
 \hline
\centering CPN-S & 2  & 14 & 43 & \magenta{$ 7.21 \times 10^{-5}$}  & $ 7.46 \times 10^{-5}$ \\ \cline{2-6}

    & 10 & 6 & 43 & \magenta{$ 7.25 \times 10^{-5}$} & \magenta{$ 7.49 \times 10^{-5}$} \\ \cline{2-6}

    & 100 & 5 & 43 & \magenta{$ 7.30 \times 10^{-5} $} & \magenta{$ 7.54 \times 10^{-5}$} \\
 \hline
 \centering CPN-LR & 2 & 12 & 43 & \magenta{$7.13 \times 10^{-5}$} & \magenta{$7.38 \times 10^{-5}$} \\ \cline{2-6}

    & 10 & 7 & 43 & \magenta{$6.99 \times 10^{-5}$} & \magenta{$7.10 \times 10^{-5}$} \\ \cline{2-6}

    & 100 & 2 & 43 & \magenta{$ 6.85 \times 10^{-5}$} & \magenta{$7.06 \times 10^{-5}$} \\
 \hline
\end{tabular}
\caption{(KdV) Results of CPN for different Lipschitz constants $L$, with $ \epsilon = 10^{-4} $ and $p=5$.}
\label{tab:different_L}
\end{table}

\paragraph{Worst-case setting}

\blue{
We now report some results to illustrate the performance of the method for the worst-case setting.
We define the relative worst-case error by
$$
RE^{\infty} \coloneqq \sup \limits_{u \in \{ u_1, \ldots, u_k \}} \| u - D(E(u)) \|_X / \sup \limits_{u \in \{ u_1, \ldots, u_k \}} \|u\|_X.$$
We choose here $\beta=1/2$.
Table \ref{tab:sparse_table_worst_case} contains the results obtained for different levels of precision.
As the error control in the worst-case is more challenging than the mean-squared case, the dimensions $n$ observed in the former are   higher, as expected. However, we observe that the algorithm still returns an encoder-decoder pair with the desired worst-case accuracy. A weaker version considering quantiles of errors (not worst case errors) could reduced the required manifold dimension.  
}
\begin{table}[h]
\centering

\begin{tabular}{|c|c|c|c|c|c|c|}
\hline
{Precision} & {Method} & $n$ & $N$ & $N_{comp}$   & $ \text{RE}_{\text{train}}^{\infty} $  & $ \text{RE}_{\text{test}}^{\infty} $\\ \hline

\centering $\epsilon = 10^{-1}$ & CPN-S & 7 & 23 & 1 & $4.74 \times 10^{-2}$ & $4.75 \times 10^{-2}$ \\ \hline
$\epsilon = 10^{-2}$ &  CPN-S & 12 & 34 & 1 & $2.73 \times 10^{-3}$ & $2.73 \times 10^{-3}$\\ \hline
$\epsilon = 10^{-3}$ &  CPN-S & 16 & 42 & 1 & $4.31 \times 10^{-4}$ & $4.31 \times 10^{-4}$ \\ \hline
$\epsilon = 10^{-4}$ &  CPN-S & 24 & 52 & 2 & $4.98 \times 10^{-5}$ & $4.99 \times 10^{-5}$ \\ \hline

\end{tabular}
\caption{(KdV) CPN-S (sparse approximation) in the worst-case setting for $ p = 5 $ and various different values of target precision $\epsilon$.}
\label{tab:sparse_table_worst_case}
\end{table}

\subsection{Allen-Cahn equation}

The Allen-Cahn equation describes the process of phase separation in multi-component alloy systems. We consider a space-time domain $\Omega \times [0, T] $ with 
$\Omega=(-1,1)$, $ T = 60 $. The equation on the phase field variable $ u : \Omega \times [0, T] \rightarrow \mathbb R $ is 
$$
    \frac{\partial u}{\partial t} = \eta^2 \Delta u + u - u^3,\quad \text{on} \quad \Omega \times [0, T],
$$
with $\eta =10^{-2} $, and with boundary conditions 
$
u(-1, t) = -1$ and $u(1, t) = 1
$ for $t\in [0,T]$, and initial condition 
$
u(x, 0) = \lambda x + (1-\lambda) \text{sin}(-1.5 \pi x)$ for $x\in \Omega$, where $\lambda$ is a uniform random variable with distribution   $U([0.5, 0.6])$ \cite{geelen2024}.

The spatial domain is discretized into $D=512$ equispaced points. A time step size $\Delta t=0.1 $ is fixed to discretize the time interval $ [0, 60] $. For the training set, three values of $ \lambda $ are considered, $ \lambda \in \{0.5, 0.55, 0.60\} $. For the test set, $ 10 $ values of $ \lambda $ are uniformly sampled in $ [0.5, 0.6] $. We therefore have $1803$ training data and $6010$ test data, including the initial conditions. 

We run our method CPN-LR with a target precision $\epsilon =10^{-3}$ and  a polynomial degree $p=3$, which results in a dimension $N = 7$ and a manifold dimension $ n= 2$ (selected by the algorithm).  We  compare  different  methods  in  Table  \ref{tab:allen_cahn_comparisons}  for  the same manifold dimension $n= 2$.  We again observe that CPN-LR outperforms SOTA methods by one order of magnitude. 

\begin{table}[h!]
\centering 

\begin{tabular}{|c|c|c|c|c|c|}
\hline
 Method & $p$  & $n$ & $N$ & $ \text{RE}_{\text{train}} $ & $ \text{RE}_{\text{test}} $ \\ 
 \hline
 Linear & / &  2  & / & $3.38 \times 10^{-2}$ & $3.35 \times 10^{-2}$ \\ 
 \hline
 Quadratic & 2 & 2 & 5 & $ 1.87 \times 10^{-2}$ & $ 1.84 \times 10^{-2}$ \\
 \hline
 \red{Greedy-quadratic} & \red 2 & \red 2 & \red 5 & \red{$ 1.87 \times 10^{-2}$} & \red{$ 1.84 \times 10^{-2}$} \\
 \hline
 Additive-AM & 3 & 2 & 7 & $ 3.42 \times 10^{-3}$ & $3.45 \times 10^{-3}$ \\
  \hline
 Sparse & 3 & 2 & 7 & $ 2.34 \times 10^{-3}$  & $ 2.19 \times 10^{-3} $ \\
 \hline
 Low-rank & 3 & 2 & 7 & $1.05 \times 10^{-3} $ & $ 1. \times 10^{-3}$ \\
 \hline
 CPN-LR $(\epsilon=10^{-3})$ & 3 & 2 & 7 & \magenta{$ 3.76 \times 10^{-4}$} & \magenta{$ 3.56 \times 10^{-4}$} \\ 
 \hline
\end{tabular}
\caption{(Allen-Cahn) Comparison of methods for the same manifold dimension $n=2$.}
\label{tab:allen_cahn_comparisons}
\end{table}

Figure \ref{fig:allen_cahn_graphs} illustrates the compositional networks for three coefficients. 

\begin{figure}[h]
    \centering
    \subfigure[$a_{3}$]{\includegraphics[width=0.3\textwidth]{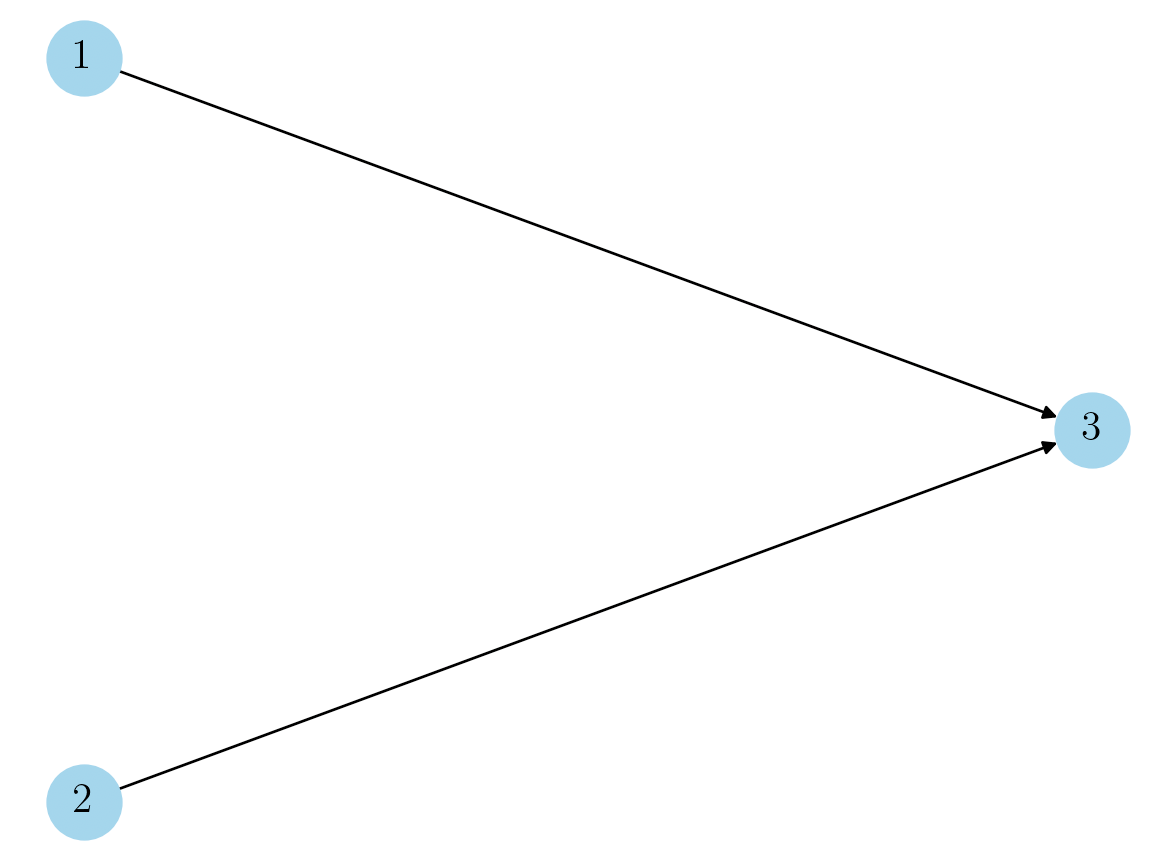}} 
    \subfigure[$ a_{5} $]{\includegraphics[width=0.3\textwidth]{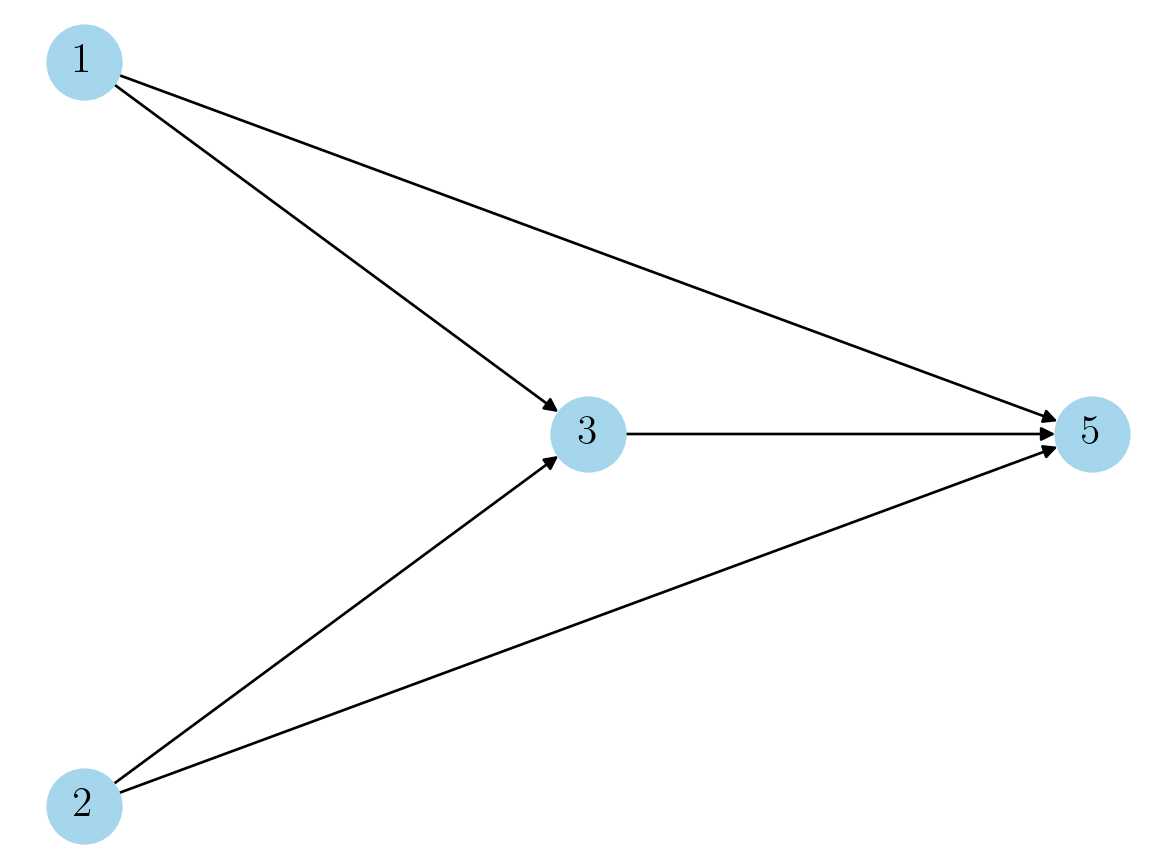}} 
    \subfigure[$ a_{6} $]{\includegraphics[width=0.3\textwidth]{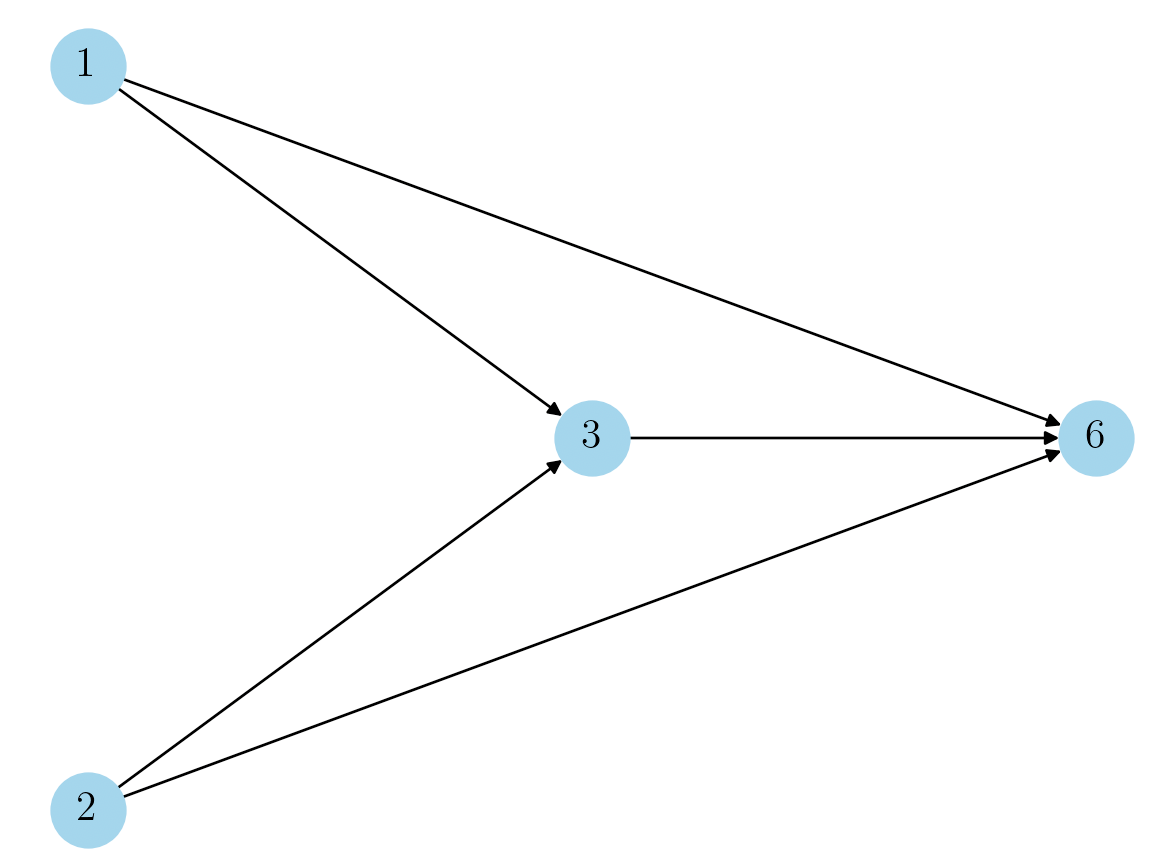}} 
    \caption{(Allen-Cahn) Networks for different coefficients, using CPN-LR with $\epsilon=10^{-3}$ and $p=3$.}
    \label{fig:allen_cahn_graphs}
\end{figure}

Figure \ref{fig:allen_cahn_viz_solution} shows the predicted solutions for $\lambda = 0.55188$. It illustrates the capacity of CPN-LR to provide a very accurate approximation  with a manifold of dimension $2$. 

 \begin{figure}[H]
    \centering
    \subfigure[Exact solution]{\includegraphics[width=0.4\textwidth]{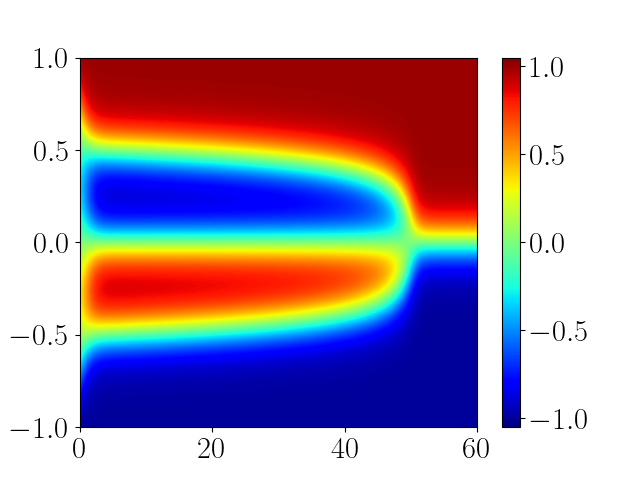}}
    \subfigure[CPN-LR]{\includegraphics[width=0.4\textwidth]{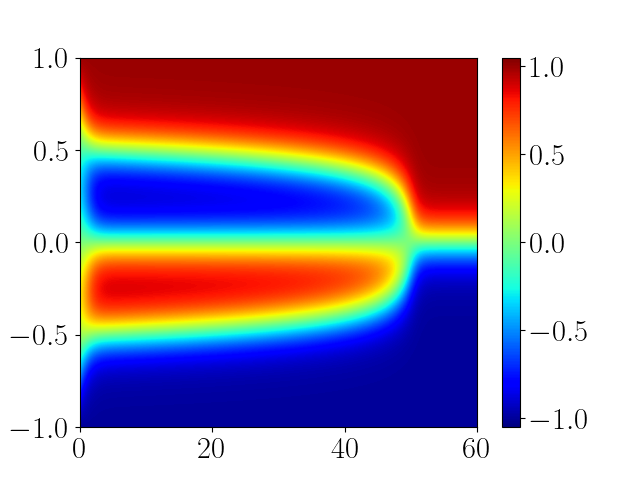}} 
    \caption{(Allen-Cahn) Predictions of CPN-LR with $n=2$, $\lambda = 0.55188$.}
    \label{fig:allen_cahn_viz_solution}
\end{figure}



\subsection{Inviscid Burgers equation}
\label{Burgers-equation}
We now assess the performance of the method on the solution manifold of the proposed 2D inviscid Burgers equation in \cite{Barnett2023Nov}, set over the space time domain $\Omega \times [0,T]$, with $\Omega = (0, 50)^2$, $T = 10$, and parametrized by a scalar parameter $\lambda \in \mathbb R$ involed in the boundary conditions.
Given $\lambda \in \mathbb R$, the equation governing the velocity field $(u_\lambda, v_\lambda) : \Omega \times [0,T] \rightarrow \Rbb^2$ reads
\begin{equation*}
    \left\{ 
    \begin{array}{l}
        \displaystyle
        \frac{\partial u_\lambda}{\partial t} + \frac{1}{2} \Big( \frac{\partial u_\lambda^2}{\partial x} + \frac{\partial u_\lambda v_\lambda}{\partial y}  \Big) = 0, 
        \qquad 
        \displaystyle 
        \frac{\partial v_\lambda}{\partial t} + \frac{1}{2} \Big(\frac{\partial u_\lambda v_\lambda}{\partial x} + \frac{\partial v_\lambda^2}{\partial y} \Big) = 0,
        \\[15pt]
        \begin{array}{ll}
            \text{initial conditions} &
            \\
            \quad u_\lambda(x, y, 0) = v_\lambda(x, y, 0) = 1, & x, y \in (0, 50),
            \\[8pt]
            \text{boundary conditions} &
            \\ 
            \\[5pt]
            \quad u_\lambda(x, 0, t) = 0, \quad v_\lambda(x, 0, t) = 0, & \quad x \in (0, 50), \enspace t \in (0, T).
        \end{array}
    \end{array}
    \right.
\end{equation*}
We consider here the solution manifold $ M = \big\{  (u_\lambda, v_\lambda): \lambda \in [1.5, 2.5] \big\} $, where $(u_\lambda, v_\lambda)$ are high-fidelity numerical solutions discretized on a regular grid of $ D = 250 \times 250 $ nodes in $\overline \Omega$ and sampled every $\Delta t = 0.03$ time units over the time domain $[0, 10]$.
The training samples are computed for $\lambda \in \{1.5, 1.7, 1.9, 2.2, 2.5\}$ and $t \in \{ k \Delta t \: : \: 0 \leq k \leq 300 \}$, resulting into $1501$ training data (the initial condition appearing without repetition).
For the test set, we uniformly sample $8$ additional values of $\lambda$ in the interval $[1.5, 2.5] $, yielding $2401$ test data.
 
We run our method CPN-S with a target precision $\epsilon =5.10^{-3}$ and  a polynomial degree $p=8$, which results in a dimension $N = 89$ and a manifold dimension $ n= 9$ (selected by the algorithm).
We  compare  different  methods  in  Table  \ref{tab:burgers_comparison_table}  for  the same manifold dimension $n= 9$.  
We again observe that CPN-S still performs better than SOTA methods.
\magenta{For this example, the Additive-AM method was computationally intractable. Note that here, Greedy-Quadratic is slightly better than the Quadratic method.}

\begin{table}[h]
\centering 

\begin{tabular}{|c|c|c|c|c|c|}
\hline
 Method & $p$  & $n$ & $N$ & $ \text{RE}_{\text{train}} $ & $ \text{RE}_{\text{test}} $ \\ 
 \hline
 Linear & / &  9  & / & $5.60 \times 10^{-2}$  & $5.01 \times 10^{-2}$ \\ 
 \hline
 Quadratic &  2 & 9 & 54 & $ 2.46 \times 10^{-2}$  & $ 2.28 \times 10^{-2}$  \\ 
 \hline
  \red{Greedy-quadratic} &  \red 2 & \red 9 & \red{54} & \red{$ 1.31 \times 10^{-2}$}  & \red{$ 1.71 \times 10^{-2}$}  \\ 
 \hline
Additive-AM & 8 & 9 & 89 & /  & /  \\
 \hline
  Low-Rank & 8 & 9 & 89 & $2.46\times 10^{-2}$ & $2.33 \times 10^{-2}$ \\
 \hline
 Sparse &  8 & 9 & 89 & $ 1.62 \times 10^{-2} $  & $ 1.62 \times 10^{-2} $  \\
 \hline
 CPN-S ($\epsilon = 5.10^{-3}$) &  8 & 9 & 89 & $ 4.74 \times 10^{-3} $ & $ 4.93 \times 10^{-3} $ \\ 
 \hline
\end{tabular}
\caption{(2D Burgers) Comparison of methods for the same manifold dimension $ n = 9 $.}
\label{tab:burgers_comparison_table}
\end{table}

Figure \ref{fig:burgers_viz_solution} shows the  solutions predicted by CPN-S for a particular $\lambda$ and two time values.

\begin{figure}[h!]
    \centering
    \subfigure[Exact solution, $t=5$]{\includegraphics[width=0.4\textwidth]{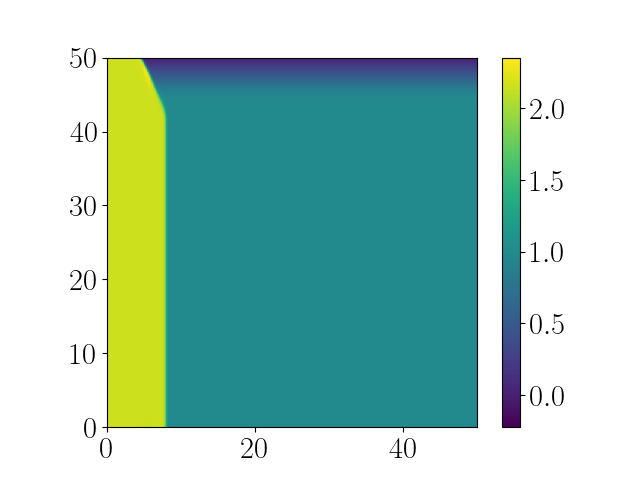}} 
    \subfigure[CPN-S, $t=5$]{\includegraphics[width=0.4\textwidth]{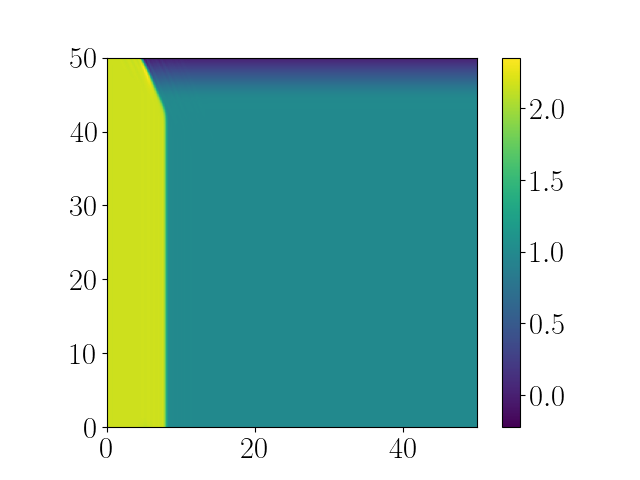}} 
    \subfigure[Exact solution $t=9.7$]{\includegraphics[width=0.4\textwidth]{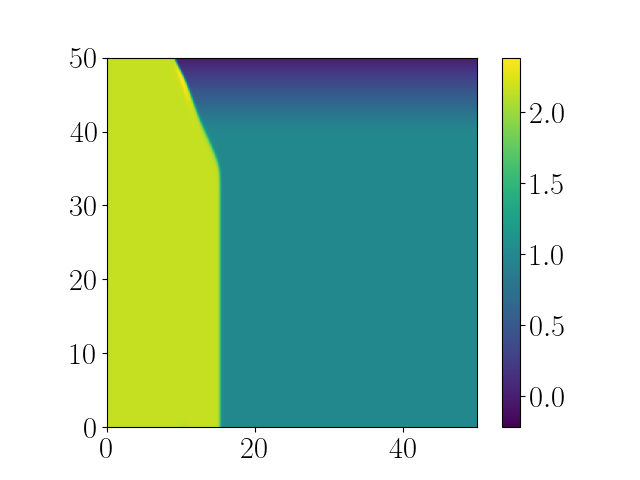}} 
    \subfigure[CPN-S, $t=9.7$]{\includegraphics[width=0.4\textwidth]{New_figures/burgers_exact_t_9.png}}
    \caption{(2D Burgers) Predictions of CPN-S with $ n = 9 $, for $\lambda = 2.15717$, at $t=5$ (top) and $t=9.7$ (bottom).}
    \label{fig:burgers_viz_solution}
\end{figure}

Figure \ref{fig:burgers_wise_viz_solution} shows errors for different methods for a particular value of $\lambda$.

\begin{figure}[h!]
    \centering
    \subfigure[Linear]{\includegraphics[width=0.3\textwidth]{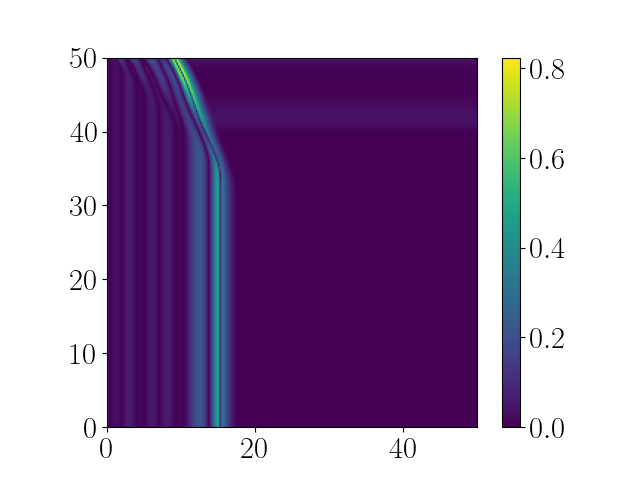}} 
    \subfigure[Quadratic]{\includegraphics[width=0.3\textwidth]{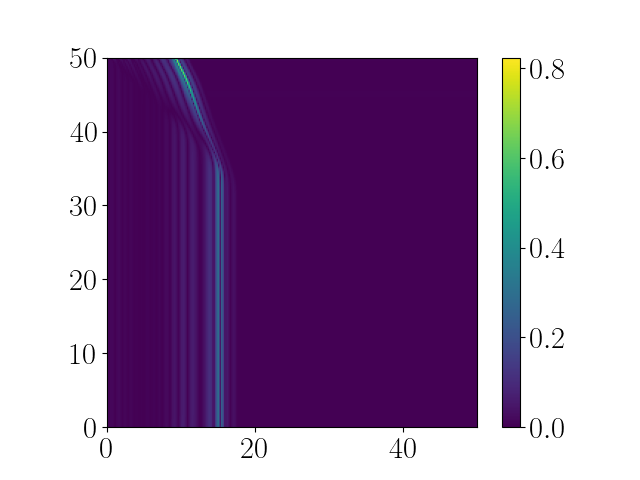}} 
    \subfigure[Greedy-quadratic]{\includegraphics[width=0.3\textwidth]{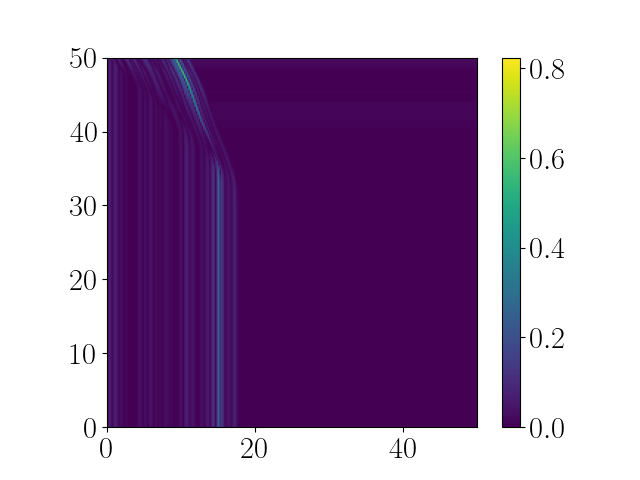}}
    \subfigure[Sparse]{\includegraphics[width=0.3\textwidth]{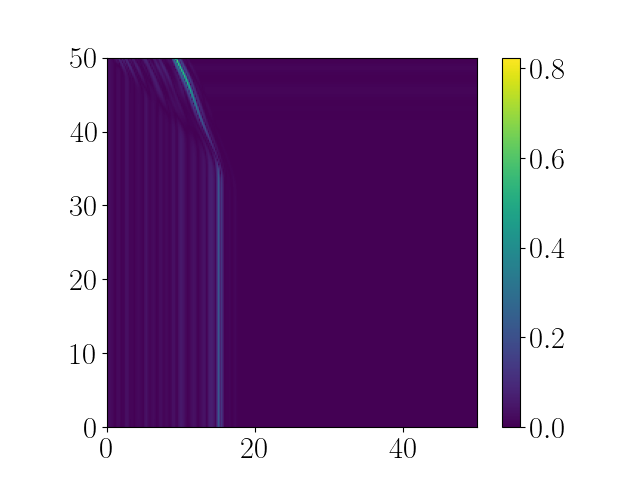}}
    \subfigure[CPN-S]{\includegraphics[width=0.3\textwidth]{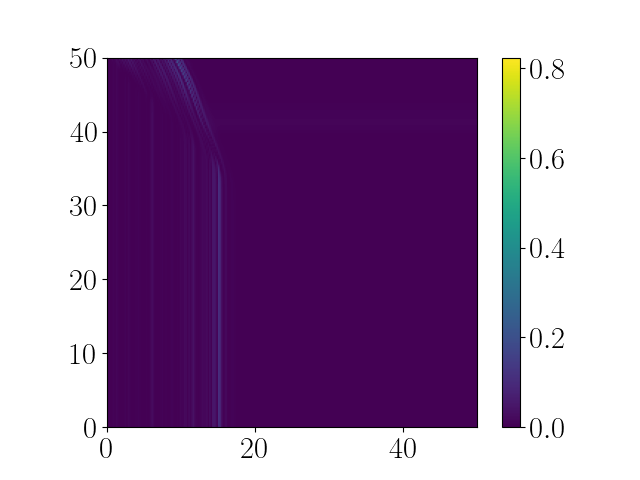}}
    \caption{(2D Burgers) Pointwise errors for $ n = 9 $,  $\lambda = 2.15717$, $t=9.7$.}
    \label{fig:burgers_wise_viz_solution}
\end{figure}


\section{Conclusion \red{and perspectives}}

\noindent

We have introduced a new method for the  construction  of an encoder-decoder pair for nonlinear manifold approximation. 
The encoder is a linear map associated with an orthogonal projection onto a low-dimensional space. 
The nonlinear decoder is a composition of structured polynomial maps  constructed using an adaptive strategy that ensures a manifold approximation with prescribed accuracy (in mean-squared or worst-case settings) and a control of the Lipschitz constant of the decoder. 
This yields an auto-encoder which is robust to perturbations in the data.   
The performance of the approach, compared to state of the art nonlinear model order reduction methods, has further been illustrated though numerical experiments. 
\red{
In the context of model order reduction for parameter-dependent models, where $M = \{u(y) : y \in Y\}$ is a set of solutions for parameters $y\in Y$, the above approach allows the offline construction of a low-dimensional manifold, which can then be used online in different ways, depending on the available information. 
An explicit approximation in terms of the parameters $y$ can be obtained by approximating offline   the coefficients $a(y) = E(u(y))$ as functions of $y$, from  the training set $((y_k , a(y_k)))_{k=1}^m$. A prediction is then obtained online without further information on the function to approximate. For the solution of a parameter-dependent equation for a new instance of parameter $y$, an approximation can also be obtained online by computing an approximation $D(a)$ which minimizes some residual norm, that requires access to the model. In the context of inverse problems, given some direct observations of a particular state $u(y)$ (with unknown value of parameters $y$), an approximation can  be obtained by computing an approximation $D(a)$ minimizing  some discrepancy between the prediction and the observations. The use of our manifold approximation method in the context of model order reduction is let for future research.}

\bibliographystyle{plain}
\bibliography{nonlinear-manifold-approximation}

\end{document}